\documentclass[11pt]{article}

\usepackage{paralist, amsmath, amsthm, amsfonts, amssymb, xy, color}
\usepackage[margin=1in]{geometry} 
\usepackage{hyperref}

\input xy
\xyoption{all}

\newtheorem{theorem}[equation]{Theorem}
\newtheorem{proposition}[equation]{Proposition}

\newtheorem{corollary}[equation]{Corollary}
\newtheorem{conjecture}[equation]{Conjecture}

\theoremstyle{definition}
\newtheorem{defn}[equation]{Definition}
\newtheorem{eg}[equation]{Example}
\newtheorem{rmk}[equation]{Remark}
\newenvironment{remark}[1][]{\begin{rmk}[#1]\pushQED{\qed}}{\popQED \end{rmk}}

\numberwithin{equation}{section}
\newcommand{\arxiv}[1]{\href{http://arxiv.org/abs/#1}{{\tt arXiv:#1}}}

% Common number systems

\newcommand{\Q}{\mathbf{Q}}

\newcommand{\Z}{\mathbf{Z}}

% Preferences
\renewcommand{\phi}{\varphi}

\renewcommand{\tilde}[1]{\widetilde{#1}}

\newcommand{\DS}{\displaystyle}
\newcommand{\rH}{\mathrm{H}}

\makeatletter
\def\Ddots{\mathinner{\mkern1mu\raise\p@
\vbox{\kern7\p@\hbox{.}}\mkern2mu
\raise4\p@\hbox{.}\mkern2mu\raise7\p@\hbox{.}\mkern1mu}}
\makeatother

% Algebra
 %image
 %codimension
 %cokernel
\renewcommand{\hom}{\operatorname{Hom}} %hom-set
 %support
 %radical
 %associated primes
 %annihilator
 %trace
 %norm
 %nilradical
\DeclareMathOperator{\rank}{rank} %rank
 %Ext
 %Castelnuovo-Mumford regularity
\DeclareMathOperator{\gr}{gr}
\newcommand{\GL}{\mathbf{GL}}

\DeclareMathOperator{\End}{End}

\DeclareMathOperator{\Sym}{Sym}

\newcommand{\Sc}{\mathbf{S}}

\newcommand{\rD}{\mathrm{D}}

\newcommand{\bF}{\mathbf{F}}

\newcommand{\bG}{\mathbf{G}}

\newcommand{\bK}{\mathbf{K}}
\newcommand{\cK}{\mathcal{K}}
\newcommand{\bL}{\mathbf{L}}

\newcommand{\cO}{\mathcal{O}}
\newcommand{\cQ}{\mathcal{Q}}
\newcommand{\cR}{\mathcal{R}}
\newcommand{\cS}{\mathcal{S}}
\newcommand{\rS}{\mathrm{S}}
\newcommand{\cT}{\mathcal{T}}
\newcommand{\cU}{\mathcal{U}}

\newcommand{\Gr}{\mathbf{Gr}}

\title{Equations and syzygies of some Kalman varieties}
\author{Steven V Sam}
\date{September 7, 2011}

% MSC 2010 classification
% Primary:
% 14M12   	Determinantal varieties
% 15A18   	Eigenvalues, singular values, and eigenvectors
% Secondary:
% 13D02   	Syzygies, resolutions, complexes
% 13P25   	Applications of commutative algebra (e.g., to statistics, control theory, optimization, etc.)

\begin{document}

\maketitle

\begin{abstract}
  Given a subspace $L$ of a vector space $V$, the Kalman variety
  consists of all matrices of $V$ that have a nonzero eigenvector in
  $L$. Ottaviani and Sturmfels described minimal equations in the case
  that $\dim L = 2$ and conjectured minimal equations for $\dim L =
  3$. We prove their conjecture and describe the minimal free
  resolution in the case that $\dim L = 2$, as well as some related
  results. The main tool is an exact sequence which involves the
  coordinate rings of these Kalman varieties and the normalizations of
  some related varieties. We conjecture that this exact sequence
  exists for all values of $\dim L$.
\end{abstract}

\section*{Introduction.}

Let $V$ be a vector space over a field of arbitrary
characteristic. For a subspace $L \subsetneqq V$, the associated
Kalman variety consists of all matrices that have a nonzero
eigenvector in $L$. A more general definition and basic properties of
Kalman varieties are contained in
Section~\ref{section:kalmandefinition}. The algebraic and geometric
properties of this variety were studied by Ottaviani and Sturmfels in
\cite{sturmfels}, and their definition was motivated by Kalman's
observability condition in control theory \cite{kalman}.

In particular, Ottaviani and Sturmfels find minimal generators for the
prime ideal of the Kalman variety when $\dim L = 2$ and conjecture the
number of equations needed when $\dim L = 3$. (When $\dim L = 1$, the
Kalman variety is an affine space.) Our main results involve
calculating the minimal free resolution in the case $\dim L = 2$
(Theorem~\ref{thm:12n}) and proving their conjecture in the case $\dim
L = 3$ (Theorem~\ref{thm:13neqn}). We point out that even though the
Kalman varieties are of determinantal type in these cases, they are
not Cohen--Macaulay varieties when $\dim V - 1 > \dim L > 1$, so the
resolution is not obtained from the Eagon--Northcott complex.

The main tool is the geometric approach to free resolutions via sheaf
cohomology (Section~\ref{section:geometric}). However, it is not a
straightforward application because this approach only provides
information for the {\it normalization} of the Kalman variety, and the
Kalman variety is not normal whenever $\dim L > 1$. The main insight
into this problem is that the Kalman varieties and their higher
analogues (defined in Section~\ref{section:kalmandefinition}) appear
to have a certain inductive structure. We prove that this structure
exists when $\dim L \le 3$ (Theorem~\ref{thm:inductive}) and
conjecture that it exists in general (see
Conjecture~\ref{conj:inductive}). As further evidence, we sketch a
proof of this conjecture in the case when $\dim V = \dim L + 1$ and
the ground field is of characteristic 0 (see
Section~\ref{sec:conjsketch}).

This inductive structure should provide a means to study the equations
of the Kalman variety when $\dim L > 3$. The validity of
Conjecture~\ref{conj:inductive} would make the Kalman varieties a good
testing ground for studying the equations and free resolutions of
non-normal varieties. In particular, there are very few known
instances where the approach described in
Section~\ref{section:geometric} works effectively for non-normal
varieties. One particularly important instance where the approach in
Section~\ref{section:geometric} is relevant but where the varieties
can fail to be normal are the nilpotent orbits in Lie theory
\cite[Chapter 8]{weyman}, so hopefully the insights gained from
studying the easier case of Kalman varieties will be useful in more
complicated situations.

The outline of the article is as follows. In Section~\ref{sec:prelim},
we summarize the properties of Kalman varieties that we will be using,
as well as the necessary constructions and theorems needed to use the
geometric approach to free resolutions. In Section~\ref{sec:normal},
we prove a few preparatory results on the normalizations of Kalman
varieties, which we use in Section~\ref{sec:equations} to prove our
main results.

\subsection*{Acknowledgements.} 

The author thanks Bernd Sturmfels for showing him \cite[Conjecture
3.6]{sturmfels} which was the motivation for this article. The author
also thanks Giorgio Ottaviani and Bernd Sturmfels for helpful comments
on an earlier draft. The author was supported by an NSF graduate
research fellowship and an NDSEG fellowship while this work was done.

\section{Preliminaries.} \label{sec:prelim}

\subsection{Kalman varieties.} \label{section:kalmandefinition}

Fix a field $K$, a vector space $V$, and a subspace $L \subsetneqq
V$. Set $W = (V/L)^*$, and let $\End(V)$ be the space of linear
operators on $V$ with coordinate ring $A = \Sym(\End(V)^*)$, which is
graded via $\deg \End(V)^* = 1$. Also, let $n = \dim V$, $d = \dim L$
and pick $1 \le s \le d$. The {\bf Kalman variety} is
\[
\cK_{s,d,n} = \{ \phi \in \End(V) \mid \text{there exists } U \subset
L \text{ such that } \dim U = s \text{ and } \phi(U) \subseteq U \}. 
\]
Equations that define $\cK_{s,d,n}$ (at least set-theoretically) can
be obtained as follows. Pick an ordered basis for $V$ starting with a
basis for $L$ followed by a basis for $V/L$ and write $\phi$ in block
matrix form $\begin{pmatrix} \alpha & \beta \\ \gamma &
  \delta \end{pmatrix}$. Then $\cK_{s,d,n}$ is the zero locus of the
$(d-s+1) \times (d-s+1)$ minors of the {\bf reduced Kalman matrix}
\begin{align} \label{eqn:reducedkalman}
\begin{pmatrix} \gamma \\ \gamma\alpha \\ \vdots \\ \gamma
  \alpha^{d-1} \end{pmatrix}
\end{align}
\cite[Theorem 4.5]{sturmfels}. These equations are far from minimal,
and it is unclear if they define a prime ideal. Note that
$\cK_{s,d,n}$ carries an action of the group
\[
P = \{ g \in \GL(V) \mid g(L) = L \},
\]
but often we will just use the symmetry provided by the subgroup $G
\cong \GL(L) \times \GL(W)$ of $P$.

Let $\Gr(s,L)$ denote the Grassmannian of $s$-dimensional subspaces of
$L$. Then $\Gr(s,L)$ has a tautological sequence of vector bundles
\begin{align*} %\label{eqn:tautological}
0 \to \cR \to L \times \Gr(s,L) \to \cQ \to 0
\end{align*}
where $\rank \cR = s$ and $\rank \cQ = d-s$. Consider the subbundle
$\cS$ of $\End(V) \times \Gr(s,L)$ defined by
\[
\cS = \{(\phi, U) \mid \phi(U) \subseteq U \}.
\]
The bundle $\cS$ is not completely reducible, but it has a filtration
whose associated graded is 
\[
\gr \cS = \End(\cR) \oplus \hom(V/\cR, V).
\]
For later use, set $\xi = ((\End(V) \times \Gr(s,L)) / \cS)^*$. Then
\begin{align*}
\xi = \cR \otimes (\cQ^* \oplus W).
\end{align*}
Let $p_1 \colon \End(V) \times \Gr(s,L) \to \End(V)$ be the
projection. Then $p_1(\cS) = \cK_{s,d,n}$ and $p_1 \colon \cS \to
\cK_{s,d,n}$ is a projective birational morphism.

For $s=d$, $\cK_{d,d,n}$ is isomorphic to affine space and its
defining ideal is generated by $L \otimes W \subset A_1$. For $s<d$,
we can deduce from the map $p_1$ that the singular locus and
non-normal locus of $\cK_{s,d,n}$ coincide and is $\cK_{s+1,d,n}$, and
that $\cK_{s,d,n}$ is an irreducible subvariety of codimension
$s(n-d)$ in $\End(V)$ \cite[Theorem 4.4]{sturmfels}. In particular,
when $n > d+1$, $\cK_{s,d,n}$ is not Cohen--Macaulay by Serre's
criterion for normality. When $s = 1$ and $n = d+1$, $\cK_{1,d,d+1}$
is a hypersurface and hence is Cohen--Macaulay, but we do not know
what happens when $s>1$ and $n=d+1$ in general.

\subsection{Characteristic-free multilinear
  algebra.} \label{sec:charfree} 

Given a partition $\lambda = (\lambda_1, \dots, \lambda_n)$, let
$\ell(\lambda)$ be the largest $i$ such that $\lambda_i \ne 0$. If
$\sum_i \lambda_i = n$, we write $\lambda \vdash n$ and $|\lambda| =
n$. The dual partition $\lambda'$ is defined by $\lambda'_i = \#\{j
\mid \lambda_j \ge i \}$. The notation $a^b$ means the sequence $(a,
\dots, a)$ ($b$ times). Given a partition, we can think of it as a
collection of boxes $(i,j)$ where $1 \le j \le \lambda_i$. The content
of $(i,j)$ is $c(i,j) = j-i$ and the hook length is $h(i,j) =
\lambda_i - j + \lambda'_j - i + 1$.

Let $R$ be a commutative ring and let $U$ be a free $R$-module of
finite rank $n$. We define the determinant of $U$ to be $\det U =
\bigwedge^n U$. The Schur and Weyl functors are denoted $\bL_\lambda
U$ and $\bK_\lambda U$, respectively. See \cite[Chapter 2]{weyman} for
their definition. However, we will change notation from \cite[Chapter
2]{weyman} so that we use $\bL_{\lambda'} U$ to mean $\bL_\lambda
U$. In particular, $\bL_d U \cong \rS^d U$, $\bL_{1^d} U \cong
\bigwedge^d U \cong \bK_{1^d} U$ and $\bK_d U \cong \rD^d U$, where
$\rS$ denotes symmetric powers and $\rD$ denotes divided powers.

We recall the relevant properties that we need. First, both
$\bK_\lambda U$ and $\bL_\lambda U$ are representations of
$\GL(U)$. Both $\bK_\lambda U$ and $\bL_\lambda U$ are free
$U$-modules of the same rank, and this rank is given by
\begin{align} \label{eqn:hookcontent}
\rank \bL_\lambda U = \rank \bK_\lambda U = \prod_{(i,j) \in \lambda}
\frac{ n + c(i,j)}{h(i,j)}
\end{align}
\cite[Corollary 7.21.4]{stanley}. In particular, we have
$\bL_\lambda(U) = 0$ if and only if $\ell(\lambda) > \rank U$ and
similarly with $\bK_\lambda(U)$. Also, $\bL_{\lambda_1 + 1, \dots,
  \lambda_n + 1} U = \det U \otimes \bL_{\lambda_1, \dots, \lambda_n}
U$, and similarly for $\bK$, so we can use this to define
$\bL_\lambda$ and $\bK_\lambda$ when $\lambda$ is a weakly decreasing
sequence of integers which are allowed to be negative. There is a
canonical isomorphism $\bL_\lambda(U^*) = \bK_\lambda(U)^*$
\cite[Proposition 2.1.18]{weyman}. Also there are isomorphisms
$\bL_{\lambda_1, \dots, \lambda_n}(U^*) = \bL_{-\lambda_n, \dots,
  -\lambda_1} U$ and similarly for $\bK$ \cite[Exercise 2.18]{weyman}.

The functors $\bL_\lambda$ and $\bK_\lambda$ are compatible with base
change. Hence it makes sense to construct $\bL_\lambda \cU$ and
$\bK_\lambda \cU$ when $\cU$ is a locally free sheaf on a scheme. When
$R$ is a $\Q$-algebra (or $\Q$-scheme), we have $\bL_\lambda U \cong
\bK_\lambda U$. In this case, we will use the notation $\Sc_\lambda U$
to make it clear that we are dealing with the characteristic 0
situation. In positive characteristic, they need not be isomorphic,
and this is one of the reasons that some of our proofs will only be
valid in characteristic 0.

Given two free modules $U$ and $U'$, the symmetric powers $\rS^d(U
\otimes U')$ have a $\GL(U) \times \GL(U')$-equivariant filtration
whose associated graded is
\[
\gr \rS^d(U \otimes U') = \bigoplus_{\lambda \vdash d} \bL_\lambda U
\otimes \bL_\lambda U'.
\]
Similarly, the exterior powers $\bigwedge^d(U \otimes U')$ have a
$\GL(U) \times \GL(U')$-equivariant filtration whose associated graded
is
\[
\gr \bigwedge^d(U \otimes U') = \bigoplus_{\lambda \vdash d}
\bL_\lambda U \otimes \bK_{\lambda'} U'
\]
\cite[Theorem 2.3.2]{weyman}. These are the {\bf Cauchy
  identities}. Furthermore, given two partitions $\lambda, \mu$, the
tensor product $\bL_\lambda U \otimes \bL_\mu U$ has a filtration
whose associated graded is of the form
\[
\gr \bL_\lambda U \otimes \bL_\mu U = \bigoplus_{\nu \vdash |\lambda|
  + |\mu|} (\bL_\nu U)^{\oplus c^\nu_{\lambda, \mu}}
\]
\cite[Theorem 3.7]{boffi}. When $R$ is a $\Q$-algebra (or
$\Q$-scheme), the above filtrations become direct sum decompositions.

The $c^\nu_{\lambda, \mu}$ are Littlewood--Richardson coefficients
(see \cite[Theorem 2.3.4]{weyman}). We will only need to know these
numbers when $\lambda = (d)$ or $\lambda = (1^d)$, which we now
explain. We say that $\nu$ is obtained from $\mu$ by adding a
horizontal strip of length $d$ if $|\nu| = |\mu| + d$ and we have the
inequalities $\nu_i \ge \mu_i \ge \nu_{i+1}$ for all $i$. In this
case, we have $c^\nu_{(d), \mu} = 1$. Otherwise, we have $c^\nu_{(d),
  \mu} = 0$. For the case $\lambda = (1^d)$, we use the identity
$c^\nu_{(1^d), \mu} = c^{\nu'}_{(d), \mu'}$. Alternatively,
$c^\nu_{(1^d), \mu}$ is nonzero (and equal to 1) if and only if $|\nu|
= |\mu| + d$ and $\mu_i \le \nu_i \le \mu_i + 1$. These are the {\bf
  Pieri rules}.

\subsection{The geometric approach to
  syzygies.} \label{section:geometric}

Fix a field $K$. Let $X$ be a projective variety and let $U$ be a
vector space, and denote the projections $p_1 \colon U \times X \to U$
and $p_2 \colon U \times X \to X$. Let $\cS \subset U \times X$ be a
subbundle with quotient bundle $\cT$ and set $Y = p_1(\cS) \subset
U$. Also, set $\xi = \cT^*$ and let $A = \Sym(U^*)$ be the coordinate
ring of $U$ with grading given by $\deg U^* = 1$. The notation $A(-i)$
denotes the ring $A$ with a grading shift so that it is generated in
degree $i$. For all $i \in \Z$, define graded $A$-modules
\[
\bF_i = \bigoplus_{j \ge 0} \rH^j(X; \bigwedge^{i+j} \xi) \otimes_K
A(-i-j).
\]

\begin{theorem} \label{thm:geometric}
\begin{enumerate}[\rm (a)]
\item \label{item:geometric1} There exist minimal differentials $d_i
  \colon \bF_i \to \bF_{i-1}$ of degree $0$ so that $\bF_\bullet$ is a
  complex of graded free $A$-modules such that
  \[
  \rH_{-i}(\bF_\bullet) = {\rm R}^i (p_1)_* \Sym(\cS^*).
  \]
  In particular, if the higher direct images of $\Sym(\cS^*)$ vanish
  and $p_1$ is birational, then $\bF_\bullet$ is a minimal $A$-free
  resolution of the normalization of $Y$.
\item \label{item:geometric2} Suppose that $\xi$ is a direct sum of
  locally free sheaves $\xi_1 \oplus \xi_2$. For $r,s \ge 0$, define
  \[
  \bF^{\le r,s}_i = \bigoplus_{j \ge 0} \bigoplus_{k=i+j-s}^r \rH^j(X;
  \bigwedge^k \xi_1 \otimes \bigwedge^{i+j-k} \xi_2) \otimes_K
  A(-i-j)
  \]
  with the convention that negative exterior powers are $0$. Then
  $\bF_\bullet^{\le r,s}$ is a subcomplex of $\bF_\bullet$.
\end{enumerate}
\end{theorem}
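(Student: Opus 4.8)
The plan is to exhibit $\bF^{\le r,s}_\bullet$ as the output of the construction underlying part~(\ref{item:geometric1}), applied to a natural subcomplex of the Koszul complex that produces $\bF_\bullet$. Recall that $\bF_\bullet$ is built from the complex $K_\bullet$ of sheaves of graded $A$-modules on $X$ whose term in homological degree $t$ is $\bigwedge^t \xi \otimes_K A(-t)$, with differential given by contraction along the tautological map $\xi \to U^* \otimes \cO_X \hookrightarrow A_1 \otimes \cO_X$: one resolves each $\bigwedge^t \xi$ in the cohomological direction (say by a Čech complex for an affine cover of $X$), totalizes the resulting double complex, and minimizes. The formula $\bF_i = \bigoplus_j \rH^j(X; \bigwedge^{i+j}\xi) \otimes A(-i-j)$ records the surviving cohomology, with the summand indexed by $j$ coming from Koszul degree $t = i+j$.

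The decomposition $\xi = \xi_1 \oplus \xi_2$ gives $K_\bullet$ a bicomplex structure. Writing $\bigwedge^t \xi = \bigoplus_{a+b=t} \bigwedge^a \xi_1 \otimes \bigwedge^b \xi_2$ and splitting the tautological section as $\xi_1 \to A_1 \otimes \cO_X$ and $\xi_2 \to A_1 \otimes \cO_X$, we see that $K_\bullet$ is the total complex of the Koszul bicomplex with $K_{a,b} = \bigwedge^a \xi_1 \otimes \bigwedge^b \xi_2 \otimes A(-a-b)$, whose horizontal differential contracts a $\xi_1$-factor (changing $(a,b)$ by $(-1,0)$) and whose vertical differential contracts a $\xi_2$-factor (changing it by $(0,-1)$). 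Each differential strictly lowers exactly one index, so the lower-left rectangle $\bigoplus_{a \le r,\, b \le s} K_{a,b}$ is a sub-bicomplex; let $K'_\bullet$ denote its total complex, a subcomplex of $K_\bullet$.

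Now I run the construction of part~(\ref{item:geometric1}) while tracking the extra $(a,b)$-grading. Since resolving and totalizing in the cohomological direction leaves $a$ and $b$ untouched, the groups $\rH^j(X; \bigwedge^a \xi_1 \otimes \bigwedge^b \xi_2)$ are honest direct summands of the Betti spaces, so the underlying graded module of $\bF_i$ is bigraded by $(a,b)$; comparing with the definition, $\bF^{\le r,s}_i$ is precisely the span of the summands with $a = k \le r$ and $b = i+j-k \le s$, i.e.\ the part built from $K'_\bullet$. It remains to check that the minimal differential $d_i$ preserves this rectangle. Each component of $d_i$ that lowers the Koszul degree $t$ by $\rho$ is represented at the chain level by $\rho$ successive contractions along the tautological section, interspersed with Čech coboundaries that leave $a$ and $b$ unchanged; each contraction uses either the $\xi_1$- or the $\xi_2$-component, hence changes $(a,b)$ by $(-1,0)$ or $(0,-1)$. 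Thus any component changes $(a,b)$ by some $(-p,-q)$ with $p,q \ge 0$ and $p+q = \rho$, so both indices weakly decrease. Consequently $a \le r$ and $b \le s$ are preserved, giving $d_i(\bF^{\le r,s}_i) \subseteq \bF^{\le r,s}_{i-1}$.

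I expect the last point to be the only real obstacle. The leading degree-one part of $d_i$ is literally the Koszul differential and visibly respects the rectangle, but the minimal differential carries higher-degree corrections coming from the later pages of the hypercohomology spectral sequence, and one must be certain these cannot raise $a$ or $b$. This is exactly what the bicomplex viewpoint secures: because $K_\bullet = \mathrm{Tot}(K_{\bullet,\bullet})$ and each Koszul differential moves $(a,b)$ only by $(-1,0)$ or $(0,-1)$, no differential on any page of the spectral sequence, and hence no component of $d_i$, can increase either index. Equivalently, the filtrations of $K_\bullet$ by the subcomplexes $\{a \le r\}$ and $\{b \le s\}$ are by subcomplexes of sheaves, these filtrations pass to $\bF_\bullet$ compatibly with minimization, and $\bF^{\le r,s}_\bullet$ is their intersection. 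Verifying this compatibility of the $(a,b)$-grading with the minimal model is the part to write carefully; the rest is the formal bookkeeping of the splitting of $\bigwedge^t \xi$.
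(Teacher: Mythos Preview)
Your argument is correct and is precisely the content of the proof of \cite[Lemma~5.2.3]{weyman} that the paper cites: one uses the bicomplex structure on the Koszul complex coming from $\xi=\xi_1\oplus\xi_2$, observes that each Koszul differential lowers exactly one of the indices $(a,b)$ while the \v Cech differential preserves both, and concludes that every component of the minimal differential weakly decreases $a$ and $b$, so the rectangle $\{a\le r,\ b\le s\}$ is a subcomplex. Since the paper's own proof is simply a reference to Weyman, you have written out what the citation points to.
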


\begin{proof} \eqref{item:geometric1} is the content of \cite[Theorems
  5.1.2, 5.1.3]{weyman} and \eqref{item:geometric2} follows from the
  proof of \cite[Lemma 5.2.3]{weyman}.
\end{proof}

Now use the notation from Section~\ref{section:kalmandefinition}. We
consider the case of a Grassmannian $X = \Gr(s,L)$ whose points are
the $s$-dimensional subspaces of $L$. The cotangent bundle of $X$ is
$\cR \otimes \cQ^*$.

\begin{theorem}[Kempf vanishing] \label{thm:kempf} Let $\alpha, \beta$
  be two partitions such that $\alpha_{d-s} \ge \beta_1$. Then
  \[
  \rH^j(\Gr(s,L); \bL_\alpha(\cR^*) \otimes \bL_\beta(\cQ^*))
  = \begin{cases} \bL_{(\alpha, \beta)}(L^*) & \text{if } j = 0, \\ 0 &
    \text{if } j > 0 \end{cases}.
  \]
  Furthermore, if $\alpha_{d-s} < \beta_1$, then $\bL_\alpha(\cR^*)
  \otimes \bL_\beta(\cQ^*)$ has no sections.
\end{theorem}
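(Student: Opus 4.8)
The plan is to reduce the statement to Kempf's vanishing theorem on the full flag variety of $L$, which holds over an arbitrary field. Using Kempf vanishing rather than Borel--Weil--Bott is essential here: Bott's theorem fails in positive characteristic, whereas the assertion is meant to be characteristic-free, and the relevant cohomology is concentrated in degree $0$ precisely in the situation (dominant weight) where Kempf vanishing is available in all characteristics.

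First I would bring in the full flag variety $\Fl(L)$ together with the projection $\pi \colon \Fl(L) \to \Gr(s,L)$ sending a complete flag $F_\bullet$ to its $s$-step $F_s$. Over a point $[R]$ the fiber of $\pi$ is $\Fl(R) \times \Fl(L/R)$, so $\pi$ factors through the relative flag bundles of $\cR$ and of $\cQ$. The concatenated weight $\lambda = (\alpha,\beta)$ determines a line bundle $\cL_\lambda$ on $\Fl(L)$ (the $i$-th tautological quotient line bundle of the flag, dualized, raised to the power $\lambda_i$), whose tensor factors for indices $1,\dots,s$ live on the flag bundle of $\cR$ and for indices $s+1,\dots,d$ on the flag bundle of $\cQ$.

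The next step is to compute the direct images of $\cL_\lambda$ along $\pi$. Because $\alpha$ and $\beta$ are each partitions, they are dominant for the respective general linear groups, so Kempf vanishing in the relative setting applies to each factor separately: one gets $\mathrm{R}^{>0}\pi_* \cL_\lambda = 0$ and $\pi_* \cL_\lambda = \bL_\alpha(\cR^*) \otimes \bL_\beta(\cQ^*)$. I would stress that this uses only that $\alpha$ and $\beta$ are individually partitions, and no relation between them. The Leray spectral sequence then degenerates, giving $\rH^j(\Gr(s,L); \bL_\alpha(\cR^*) \otimes \bL_\beta(\cQ^*)) \cong \rH^j(\Fl(L); \cL_\lambda)$ for all $j$.

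Finally I would apply Kempf vanishing on $\Fl(L)$ itself: the hypothesis guarantees that the concatenation $\lambda = (\alpha,\beta)$ is weakly decreasing, i.e.\ a dominant weight for $\GL(L)$, whence $\rH^{>0}(\Fl(L); \cL_\lambda) = 0$ and $\rH^0(\Fl(L); \cL_\lambda)$ is the costandard (dual Weyl) module, namely $\bL_{(\alpha,\beta)}(L^*)$ in the notation of Section~\ref{sec:charfree}. Combined with the previous step this gives the displayed formula. For the last sentence, the same reduction identifies the sections of $\bL_\alpha(\cR^*) \otimes \bL_\beta(\cQ^*)$ with $\rH^0(\Fl(L); \cL_\lambda)$, which vanishes because a line bundle attached to a non-dominant weight on a flag variety has no global sections. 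The step I expect to be the main obstacle is keeping the whole argument genuinely characteristic-free: one must invoke Kempf vanishing in place of Borel--Weil--Bott, verify that $\pi_*\cL_\lambda$ really is the Schur-functor bundle with vanishing higher direct images using only the individual dominance of $\alpha$ and $\beta$, and fix conventions so that $\rH^0$ is computed by the functor $\bL$ (the costandard module) and not by $\bK$.
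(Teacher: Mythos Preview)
Your proposal is correct and is essentially the same as the paper's approach: the paper simply cites \cite[Theorem~3.1.1]{frobenius} for the vanishing statement and, for the last sentence, notes that the sheaf is the pushforward of a line bundle on the full flag variety which has sections only for dominant weights. Your sketch just unpacks that citation, carrying out the reduction to $\Fl(L)$ via Leray and Kempf vanishing exactly as one would expect.
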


\begin{proof} For the first statement, see \cite[Theorem
  3.1.1]{frobenius}. For the second statement, the sheaf
  $\bL_\alpha(\cR^*) \otimes \bL_\beta(\cQ^*)$ is the pushforward of a
  line bundle on the flag variety, and this line bundle has global
  sections if and only if $\alpha_{d-s} \ge \beta_1$.
\end{proof}

Given a permutation $w$, we define the {\bf length} of $w$ to be
$\ell(w) = \#\{ i < j \mid w(i) > w(j) \}$. Also, define $\rho = (d-1,
d-2, \dots, 1, 0)$. Given a sequence of integers $\alpha$, we define
$w \bullet \alpha = w(\alpha + \rho) - \rho$.

\begin{theorem}[Borel--Weil--Bott] \label{thm:bott} Suppose that the
  characteristic of $K$ is $0$. Let $\alpha$, $\beta$ be two
  partitions and set $\nu = (\alpha, \beta)$. Then exactly one of the
  following two situations occur.
  \begin{enumerate}[\rm 1.]
  \item There exists $w \ne \mathrm{id}$ such that $w \bullet \nu =
    \nu$. Then all cohomology of $\Sc_\alpha \cQ \otimes \Sc_\beta
    \cR$ vanishes.
  \item There is a $($unique$)$ $w$ such that $\eta = w \bullet \nu$
    is a weakly decreasing sequence. Then
    \[
    \rH^{\ell(w)}(\Gr(s,L); \Sc_\alpha \cQ \otimes \Sc_\beta \cR) =
    \Sc_\eta L
    \]
    and all other cohomology vanishes. 
    % Alternatively, if $w = uv$ with $\ell(w) = \ell(u) + \ell(v)$,
    % then $\rH^\bullet(\Gr(s,L); E_\nu) = \rH^{\bullet - \ell(v)}
    % (\Gr(s,L); E_{v \bullet \nu})$.
  \end{enumerate}
\end{theorem}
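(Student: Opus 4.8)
The plan is to transfer the computation from the Grassmannian to the full flag variety $\Fl(L)$ of complete flags in $L$, where the Borel--Weil--Bott recursion is available in its classical form. Write $\pi \colon \Fl(L) \to \Gr(s,L)$ for the projection that remembers only the $s$-dimensional step, and let $\cL_\nu$ denote the line bundle on $\Fl(L)$ attached to the weight $\nu = (\alpha, \beta)$. First I would show that $\Sc_\alpha \cQ \otimes \Sc_\beta \cR$ is exactly $\pi_* \cL_\nu$ with all higher direct images vanishing, so that the Leray spectral sequence collapses and yields a $\GL(L)$-equivariant isomorphism $\rH^\bullet(\Gr(s,L); \Sc_\alpha \cQ \otimes \Sc_\beta \cR) \cong \rH^\bullet(\Fl(L); \cL_\nu)$. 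The entire cohomology computation is then carried out on $\Fl(L)$, where the relevant Weyl group is $S_d$ and the dotted action $w \bullet \nu$ and length $\ell(w)$ are precisely those defined before the statement.

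For the reduction I would use that the fibers of $\pi$ are products of the flag varieties of $\cR$ and of $\cQ$, and that $\nu$ restricts on each factor to a \emph{dominant} weight exactly because $\alpha$ and $\beta$ are partitions. Applying the $j=0$ (dominant) case of Kempf vanishing, Theorem~\ref{thm:kempf}, fiberwise and invoking cohomology and base change gives that the fiber cohomology is concentrated in degree $0$ and equals $\Sc_\alpha \cQ \otimes \Sc_\beta \cR$. Hence $\mathrm{R}^{>0}\pi_* \cL_\nu = 0$ and $\pi_* \cL_\nu = \Sc_\alpha \cQ \otimes \Sc_\beta \cR$, which establishes the isomorphism of cohomologies above.

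On $\Fl(L)$ I would induct on the number of inversions of the sequence $\nu + \rho$. The base case, in which $\nu + \rho$ is strictly decreasing (so $\nu$ is dominant), is Borel--Weil: $\rH^0(\Fl(L); \cL_\nu) = \Sc_\nu L$ and the higher cohomology vanishes, which is again the dominant case of Theorem~\ref{thm:kempf}. For the inductive step, when $\nu$ is not dominant there is an adjacent index $i$ with $\nu_i < \nu_{i+1}$; I would use the $\P^1$-bundle over the partial flag variety obtained by forgetting the $i$-th subspace. Along its fibers $\cL_\nu$ restricts to $\cO(\nu_i - \nu_{i+1})$. If $\nu_i - \nu_{i+1} = -1$, equivalently $(\nu+\rho)_i = (\nu+\rho)_{i+1}$, all direct images and hence all cohomology vanish, which is the singular case $w \bullet \nu = \nu$; if $\nu_i - \nu_{i+1} \le -2$, the only surviving direct image is $\mathrm{R}^1$, and the $\SL_2$-computation on $\P^1$ together with the Leray spectral sequence produces an isomorphism $\rH^j(\Fl(L); \cL_\nu) \cong \rH^{j-1}(\Fl(L); \cL_{s_i \bullet \nu})$. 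Since $s_i \bullet \nu$ has strictly fewer inversions in $s_i\bullet\nu + \rho$, the induction terminates either at the unique dominant representative (the regular case, recording the degree shift $\ell(w)$ and the weight $w \bullet \nu$) or at an adjacent equality (the singular case).

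The main obstacle will be the bookkeeping in the inductive step: verifying that the Leray spectral sequence for the $\P^1$-bundle degenerates so that the degree-raising map is a clean isomorphism with no extension problems, and confirming that passing from $\nu$ to $s_i \bullet \nu$ strictly decreases the inversion count so that the recursion converges to the unique dominant sorting of $\nu + \rho$. Both points are exactly where the characteristic $0$ hypothesis is essential, since semisimplicity of $\GL(L)$-representations is what splits the spectral sequence; this is also precisely the input that Kempf vanishing, which holds in all characteristics but only in the dominant degree, does not by itself provide.
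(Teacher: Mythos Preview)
The paper does not prove this statement at all; its entire proof is the citation ``See \cite[Corollary 4.1.9]{weyman}.'' Your outline is exactly the standard argument recorded there (Demazure's proof): push the computation from $\Gr(s,L)$ up to the full flag variety via Leray, then walk the weight to the dominant chamber by successive $\P^1$-bundle reflections. So there is no genuine comparison to make---you have reconstructed the referenced proof.

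One correction to your final paragraph. In the inductive $\P^1$-bundle step the Leray spectral sequence degenerates for trivial reasons, not by semisimplicity: when $\nu_i - \nu_{i+1} \le -2$ the only nonzero direct image is $\mathrm{R}^1\pi_*$, so the spectral sequence has a single nonzero row and collapses at $E_2$; when $\nu_i - \nu_{i+1} = -1$ all direct images vanish. The characteristic~$0$ hypothesis is genuinely needed, but it enters elsewhere: in identifying $\mathrm{R}^1\pi_*\cL_\nu$ with $\pi_*\cL_{s_i\bullet\nu}$ as equivariant sheaves on the partial flag variety. Relative Serre duality on the $\P^1$-fibers produces a divided power where one wants a symmetric power, and these only agree when the characteristic is~$0$ (this is the $\SL_2$-computation you allude to). So your instinct that characteristic~$0$ is essential is right, but the obstacle you name---extension problems in the spectral sequence---is not the actual one.
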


\begin{proof} See \cite[Corollary 4.1.9]{weyman}. \end{proof}

\section{Normalizations of Kalman varieties.} \label{sec:normal}

Let $\cO_{s,d,n}$ denote the coordinate ring of $\cK_{s,d,n}$ and let
$\tilde{\cO}_{s,d,n}$ denote the normalization of $\cO_{s,d,n}$. In
this section we prove some results on $\cO_{s,d,n}$ that will be used
in the main results of this article (Theorem~\ref{thm:12n} and
Theorem~\ref{thm:13neqn}). Some additional results on the
normalizations can be found in Proposition~\ref{prop:s=d-1} and
Proposition~\ref{prop:n=d+1}. Continue the notation of
Section~\ref{section:kalmandefinition}.

\begin{proposition} Over a field of characteristic $0$, the higher
  direct images of $\cS$ vanish for all $s,d,n$. In particular,
  $\tilde{\cO}_{s,d,n}$ has rational singularities and hence is
  Cohen--Macaulay. The higher direct images also vanish in arbitrary
  characteristic in the case $s=1$ and in the case $s=2$, $d=3$. In
  particular, $\tilde{\cO}_{1,d,n}$ and $\tilde{\cO}_{2,3,n}$ are flat
  over $\Z$.
\end{proposition}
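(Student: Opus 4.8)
The plan is to use the geometric approach from Theorem~\ref{thm:geometric}, where the relevant statement is that the higher direct images $\rH_{-i}(\bF_\bullet) = \mathrm{R}^i(p_1)_* \Sym(\cS^*)$ vanish for $i > 0$. Since $\Sym(\cS^*)$ is filtered with associated graded $\Sym(\gr \cS^*)$, and $\gr \cS = \End(\cR) \oplus \hom(V/\cR, V)$, the computation reduces to understanding the cohomology of the bundles appearing in $\Sym$ of the dual. The key point is that the fibers of $p_1$ over the generic point are reduced to a single point (birationality), and the cohomology we must control lives on the Grassmannian $X = \Gr(s,L)$. So the whole problem becomes a vanishing statement for sheaf cohomology of Schur-type bundles on the Grassmannian, which I would analyze via the symmetric powers of $\gr \cS^*$ decomposed into irreducible summands $\bL_\alpha(\cR^*) \otimes \bL_\beta(\cQ^*) \otimes (\text{trivial factors from } W)$.

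First I would handle the characteristic $0$ case using Borel--Weil--Bott (Theorem~\ref{thm:bott}). The direct image $\mathrm{R}^i(p_1)_*\Sym(\cS^*)$ can be computed degree by degree; in each graded piece one gets a finite direct sum of bundles of the form $\Sc_\alpha \cQ \otimes \Sc_\beta \cR$ (up to twisting by trivial $W$-factors). The crucial observation is that the weights $\nu = (\alpha, \beta)$ arising from $\Sym(\gr\cS^*)$ are always \emph{dominant} (weakly decreasing) after accounting for how $\End(\cR)$ and $\hom(V/\cR, V)$ contribute: the $\End(\cR) = \cR \otimes \cR^*$ factor and the $\hom(V/\cR,V)$ factor, when dualized and symmetrized, produce partitions whose concatenation $(\alpha,\beta)$ lands in the dominant chamber. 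In that situation case~2 of Theorem~\ref{thm:bott} applies with $w = \mathrm{id}$, so $\ell(w) = 0$ and all higher cohomology vanishes. This yields the characteristic $0$ statement, and the Cohen--Macaulay and rational singularities conclusions follow from the general theory cited in Theorem~\ref{thm:geometric}(a) together with standard facts about rational singularities.

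For the two positive-characteristic cases ($s=1$ all $d,n$, and $s=2, d=3$), Borel--Weil--Bott is unavailable, so I would instead use Kempf vanishing (Theorem~\ref{thm:kempf}), which holds in arbitrary characteristic but only gives the answer when $\alpha_{d-s} \ge \beta_1$. When $s=1$, the Grassmannian is a projective space $\P(L)$, $\cR$ is a line bundle, and $\gr \cS^*$ decomposes so that the symmetric powers produce only bundles of the form $\bL_\alpha(\cR^*) \otimes \bL_\beta(\cQ^*)$ where the single-row constraint makes the hypothesis $\alpha_{d-s} \ge \beta_1$ automatic (or the section-free case applies harmlessly in higher cohomological degree via the second statement of Theorem~\ref{thm:kempf}); hence all higher cohomology vanishes. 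The case $s=2, d=3$ is genuinely small: $\cR$ has rank $2$, $\cQ$ has rank $1$, and $\Gr(2,L) \cong \P(L^*)$ is again one-dimensional, so only $\rH^0$ and $\rH^1$ can be nonzero and I would verify the relevant $\rH^1$-vanishing by a direct check using Kempf vanishing term by term.

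The main obstacle will be the positive-characteristic case $s=2, d=3$: here the associated graded filtration of $\Sym(\cS^*)$ does not split, and the Schur-functor identities from Section~\ref{sec:charfree} only give \emph{filtrations} rather than direct sum decompositions, so I cannot simply read off each irreducible summand and apply Kempf vanishing. I would need to argue that Kempf vanishing applied to each associated-graded piece, combined with the long exact sequences coming from the filtration, forces the higher cohomology of the whole symmetric power to vanish — this requires checking that every Schur bundle $\bL_\alpha(\cR^*)\otimes\bL_\beta(\cQ^*)$ appearing in the graded pieces satisfies the dominance condition $\alpha_{d-s} \ge \beta_1$, or else lies in the section-free regime, in the small cases at hand. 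Verifying this uniformly across all symmetric degrees is the delicate combinatorial heart of the argument, and it is presumably why the characteristic-free result is restricted to exactly these two small families.
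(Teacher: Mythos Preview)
Your approach for the $s=1$ case is essentially the paper's: both apply Kempf vanishing to $\Sym(\gr\cS^*)$, using that when $s=1$ the factor $\End(\cR)$ is trivial so every summand is of the form $\bL_\lambda\cQ$ tensored with trivial bundles, hence dominant.

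However, your characteristic-$0$ argument for general $s$ has a genuine gap. The ``crucial observation'' that the weights arising in $\Sym(\gr\cS^*)$ are always dominant is false once $s\ge 2$. The factor $\End(\cR)^* = \cR^*\otimes\cR$ already produces non-dominant weights: for instance with $s=2$, $d=3$, the summand $\Sc_{(2,-2)}\cR \subset \rS^2\cR^*\otimes\rS^2\cR \subset \Sym^2(\End(\cR)^*)$ has weight $\nu=(0,2,-2)$, and Borel--Weil--Bott gives $\rH^1(\Gr(2,L);\Sc_{(2,-2)}\cR)=\Sc_{(1,1,-2)}L\ne 0$. So individual pieces of $\Sym(\gr\cS^*)$ \emph{do} have higher cohomology, and you cannot conclude vanishing for $\Sym(\cS^*)$ term by term; the filtration of $\cS$ does not split, and the necessary cancellation happens only at the level of the full complex. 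The paper avoids this entirely: it works instead with $\bigwedge^q\xi$ (recall $\xi = \cR\otimes(\cQ^*\oplus W)$), observes that every summand $\Sc_\lambda\cR\otimes\Sc_\mu\cQ^*\otimes\Sc_\nu W$ has $|\lambda|=q$, and uses Borel--Weil--Bott to bound its cohomological degree by $q$. This yields $\bF_i=0$ for $i<0$, and then Theorem~\ref{thm:geometric}\eqref{item:geometric1} gives the vanishing of higher direct images.

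Your treatment of $s=2$, $d=3$ also contains an error: $\Gr(2,L)\cong\P(L^*)$ is two-dimensional, not one-dimensional, so $\rH^2$ is in play. More importantly, the same counterexample above shows that a term-by-term Kempf argument on $\Sym(\gr\cS^*)$ cannot succeed here either. The paper handles this case by a completely different route (deferred to Proposition~\ref{prop:23ntilde}): it computes the relevant cohomology of $\bigwedge^q\xi$ for $q\le 5$, using Borel--Weil--Bott when $p>5$ and a direct Macaulay2 computation for $p\in\{2,3,5\}$, together with knowledge of the Euler characteristic from characteristic~$0$.
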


Combined with Theorem~\ref{thm:inductive} we conclude that
$\cO_{1,d,n}$ and $\cO_{2,3,n}$ are also flat over $\Z$.

\begin{proof} First suppose that characteristic is 0. By
  Theorem~\ref{thm:geometric}\eqref{item:geometric1}, it is enough to
  show that $\bF_i = 0$ for $i < 0$. The summands of $\bigwedge^q \xi$
  are of the form $\Sc_\lambda \cR \otimes \Sc_\mu \cQ^* \otimes
  \Sc_\nu W$ where $|\lambda| = q$ and $|\mu| \le q$. From the
  description of Borel--Weil--Bott (Theorem~\ref{thm:bott}), it is
  clear that such a sheaf can only have cohomology in degree at most
  $q$, which proves the claim.

  Now suppose that the characteristic is arbitrary. For $s=1$, the
  claim follows from Kempf vanishing (Theorem~\ref{thm:kempf}) since
  $\gr \cS = \cO + \hom(\cQ, V) + \hom(W^*,V)$, so $\Sym(\gr \cS^*)$
  has no higher cohomology, and hence the same is true for
  $\Sym(\cS^*)$. The case of $s=2$ and $d=3$ will be shown in
  Proposition~\ref{prop:23ntilde}.
\end{proof}

\begin{remark} We expect that the higher direct images vanish for all
  $s,d,n$ and in all characteristics, but we are unable to prove this.
\end{remark}

\begin{proposition} \label{prop:1dntilde} $\tilde{\cO}_{1,d,n}$ has
  $($Castelnuovo--Mumford$)$ regularity $d-1$ and the terms of its
  minimal free resolution $\bF_\bullet$ are
\begin{align*}
  {\bf F}_0 &= A \oplus A(-1) \oplus \cdots \oplus A(-d+1)\\
  {\bf F}_i &= \bigoplus_{a=\max(0,i+2d-1-n)}^{d-1}
  \bK_{(i,1^{d-a-1})} L \otimes \bigwedge^{i+d-a-1} W \otimes
  A(-i-d+1) \quad (1 \le i \le n-d).
\end{align*}
\end{proposition}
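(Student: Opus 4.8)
The plan is to run the geometric technique of Theorem~\ref{thm:geometric}\eqref{item:geometric1} on the Grassmannian $X=\Gr(1,L)=\P(L)$, which has dimension $d-1$. Since the higher direct images of $\Sym(\cS^*)$ vanish when $s=1$ in every characteristic (established above) and $p_1\colon\cS\to\cK_{1,d,n}$ is birational, the complex $\bF_\bullet$ with its minimal differentials is literally the minimal free resolution of $\tilde{\cO}_{1,d,n}$. Thus everything reduces to computing $\rH^j(\P(L);\bigwedge^k\xi)$ for $\xi=\cR\otimes(\cQ^*\oplus W)$. Because $\cR$ has rank $1$, the exterior power of a line bundle tensored with a module is elementary and the decomposition is characteristic-free: $\bigwedge^k\xi=\bigoplus_{a+b=k}\cR^{\otimes k}\otimes\bigwedge^a\cQ^*\otimes\bigwedge^b W$, where $\bigwedge^b W$ is a trivial bundle of rank $\binom{n-d}{b}$. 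Using $\cQ^*=\Omega^1_{\P(L)}(1)$ and $\cR=\cO_{\P(L)}(-1)$, the first two factors combine to the twisted sheaf of $a$-forms $\cR^{\otimes k}\otimes\bigwedge^a\cQ^*=\Omega^a_{\P(L)}(a-k)$.

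Next I would record that the cohomology of $\Omega^a_{\P(L)}(t)$ is concentrated in degrees $0$ and $d-1$, together with the single diagonal class $\rH^a(\P(L);\Omega^a)=K$, and that this holds in all characteristics. For sections, writing $\Omega^a(t)=\bL_{(t-a)}(\cR^*)\otimes\bL_{(1^a)}(\cQ^*)$ and applying Kempf vanishing (Theorem~\ref{thm:kempf}) gives $\rH^0(\P(L);\Omega^a(t))=\bL_{(t-a,1^a)}(L^*)$ for $t>a$, with no higher cohomology. For the top cohomology I would invoke Serre duality, which is valid over $\Z$ since $\P(L)$ is smooth projective and the sheaves are locally free, together with $(\Omega^a)^\vee\otimes\omega_{\P(L)}=\Omega^{d-1-a}$, to obtain $\rH^{d-1}(\P(L);\Omega^a(a-k))=\rH^0(\P(L);\Omega^{d-1-a}(k-a))^*$, which the previous formula identifies with the Weyl module $\bK_{(k-d+1,\,1^{d-1-a})}(L)$ for $k\ge d$.

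Assembling the pieces, when $j=0$ the only nonzero contributions are the diagonal classes $\rH^a(\Omega^a)=K$ forced by $a-k=0$, i.e. $k=a\le d-1$; these give $\bF_0=A\oplus A(-1)\oplus\cdots\oplus A(-d+1)$. When $i\ge1$ all sections vanish and the sole contribution is the top cohomology $j=d-1$ with $k=i+j=i+d-1\ge d$, for which the identification above yields $\rH^{d-1}(\P(L);\Omega^a(a-i-d+1))=\bK_{(i,1^{d-1-a})}(L)$; tensoring with $\bigwedge^{i+d-1-a}W$ produces the stated summands of $\bF_i$, all generated in the single degree $i+d-1$. The index range then follows from nonvanishing: $\bK_{(i,1^{d-1-a})}L\ne0$ forces $0\le a\le d-1$ and $\bigwedge^{i+d-1-a}W\ne0$ forces $a\ge i+2d-1-n$, giving $\max(0,i+2d-1-n)\le a\le d-1$ and hence $\bF_i=0$ once $i>n-d$. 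Since every generator of $\bF_i$ lies in degree $i+d-1$ for $i\ge1$ and those of $\bF_0$ in degrees at most $d-1$, the Castelnuovo--Mumford regularity equals $d-1$.

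The step demanding the most care — and the main obstacle — is the characteristic-free cohomology computation, since Borel--Weil--Bott (Theorem~\ref{thm:bott}) is only available in characteristic $0$. The two-term concentration of the cohomology of $\Omega^a_{\P(L)}(t)$ and the identification of its top cohomology with an honest Weyl functor must instead be derived from Kempf vanishing and Serre duality. Here the flatness of $\tilde{\cO}_{1,d,n}$ over $\Z$ established above, which makes the relevant cohomology groups free of the expected rank and compatible with base change, is what guarantees that no spurious intermediate cohomology appears in positive characteristic and that the characteristic-$0$ answer persists verbatim.
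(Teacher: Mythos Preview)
Your approach is essentially the same as the paper's: decompose $\bigwedge^k\xi$ using that $\cR$ is a line bundle, then compute the cohomology of the resulting bundles $\cR^{\otimes k}\otimes\bigwedge^a\cQ^*$ on $\P(L)$ in the two regimes $k\le d-1$ and $k\ge d$, the latter via Serre duality and Kempf vanishing. The paper phrases the first regime by citing \cite[Proposition~5.5]{exterior} for the characteristic-free Bott formula on projective space, which is exactly your ``concentration in degrees $0$ and $d-1$ plus the diagonal class $\rH^a(\Omega^a)=K$''; your Serre-duality computation of $\rH^{d-1}$ for $k\ge d$ matches the paper's verbatim.

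One comment on your final paragraph: the appeal to flatness of $\tilde{\cO}_{1,d,n}$ over $\Z$ is unnecessary and slightly circular. In the paper, that flatness is a \emph{consequence} of the direct cohomology computation (Kempf plus Serre duality plus the Bott formula, all of which are characteristic-free on $\P^{d-1}$), not an input to it. Your argument already establishes the vanishing of intermediate cohomology directly once you invoke the Bott formula for $\Omega^a(t)$; there is no need to import semicontinuity or base-change to rule out ``spurious'' classes. You should simply state the Bott formula as the characteristic-free fact it is (or cite \cite{exterior}), and drop the last sentence.
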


\begin{proof} Use the notation of Section~\ref{section:geometric}. We
  have
  \[
  \bigwedge^q \xi = \bigoplus_{i=0}^{d-1} \rS^q \cR \otimes \bigwedge^i
  \cQ^* \otimes \bigwedge^{q-i} W
  \]
  with the convention that negative exterior powers are 0. For $0 \le
  q \le d-1$, we have
  \[
  \rH^j(\Gr(1,L); \rS^q \cR \otimes \bigwedge^i \cQ^*) = \begin{cases} K
    & \text{if } q=i=j, \\ 0 & \text{else} \end{cases}
  \]
  \cite[Proposition 5.5]{exterior}. For $d \le q$, we have by Serre
  duality that
  \begin{align*}
    \rH^j(\Gr(1,L); \rS^q \cR \otimes \bigwedge^i \cQ^*) &=
    \rH^{d-1-j}(\Gr(1,L); \rS^{q-d} \cR^* \otimes \bigwedge^i \cQ)^*
    \otimes \det L\\
    &= \rH^{d-1-j}(\Gr(1,L); \rS^{q-d+1} \cR^* \otimes
    \bigwedge^{d-1-i} \cQ^*)^*.
  \end{align*}
  By Kempf vanishing (Theorem~\ref{thm:kempf}), the last term is 0 for
  $j < d-1$. When $j=d-1$, we get
  \begin{align*}
    \rH^0(\Gr(1,L); \rS^{q-d+1} \cR^* \otimes \bigwedge^{d-1-i}
    \cQ^*)^* = \bL_{(q-d+1,1^{d-1-i})}(L^*)^* =
    \bK_{(q-d+1,1^{d-1-i})}L,
  \end{align*}
  and this term contributes to $\bF_{q-d+1}$. The rest follows from
  Section~\ref{section:geometric}.
\end{proof}

\begin{corollary} \label{cor:1dntilde} Let $\bF_\bullet$ be the
  minimal free resolution of $\tilde{\cO}_{1,d,n}$. For $i>1$, the
  only nonzero components in the differential $\bF_i \to \bF_{i-1}$
  are the maps
  \begin{align*}
    \bK_{i,1^{d-a-1}} L \otimes \bigwedge^{i+d-1-a} W \otimes
    A(-i-d+1) \to \begin{array}{c} \DS \bK_{i-1,1^{d-a-1}} L \otimes
      \bigwedge^{i+d-2-a} W \\ \DS \bK_{i-1,1^{d-a}} L \otimes
      \bigwedge^{i+d-1-a} W \end{array} \otimes A(-i-d+2) ,
  \end{align*}
  with the convention that a term on the right is $0$ if it does not
  appear in $\bF_{i-1}$.
\end{corollary}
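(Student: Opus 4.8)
The goal is to identify, for $i>1$, which maps $\bF_i \to \bF_{i-1}$ can be nonzero. From Proposition~\ref{prop:1dntilde}, the summands of $\bF_i$ are indexed by $a$ and have the form $\bK_{(i,1^{d-a-1})} L \otimes \bigwedge^{i+d-a-1} W$, tensored with $A(-i-d+1)$. The plan is to analyze the differential purely representation-theoretically, using the $G \cong \GL(L) \times \GL(W)$-equivariance of the minimal resolution together with the grading.

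First I would observe that the differential has degree $0$ and the modules are generated in internal degree $i+d-1$, so a map from a summand of $\bF_i$ to a summand of $\bF_{i-1}$ is an $A$-linear map given by multiplication by a degree-one element, i.e.\ an element of $A_1 = (L \otimes W) \oplus (\text{other blocks})$. By $G$-equivariance, only the $L \otimes W$ part of $A_1$ can contribute to maps between summands that are built from $L$ and $W$ alone (the other blocks $\alpha, \beta, \delta$ of $\End(V)^*$ do not transform correctly), so the relevant piece of $A_1$ is $L \otimes W$, with $L$ in the Schur-functor slot of $\GL(L)$ and $W$ likewise. Then I would use the Pieri/Littlewood–Richardson decomposition from Section~\ref{sec:charfree}: the target generators live in $\bK_\mu L \otimes \bigwedge^m W$ for partitions $\mu$, and a nonzero $G$-equivariant map out of $\bK_{(i,1^{d-a-1})} L \otimes \bigwedge^{i+d-a-1} W \otimes (L\otimes W)$ requires that $\bK_\mu L$ appear in $\bK_{(i,1^{d-a-1})} L \otimes L$ and $\bigwedge^m W$ appear in $\bigwedge^{i+d-a-1} W \otimes W$.

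The exterior factor forces $m = i+d-a-2$, since $\bigwedge^{m} W$ occurs in $\bigwedge^{i+d-a-1} W \otimes W$ only for $m = i+d-a-2$ (the remaining occurrence at $m=i+d-a$ is ruled out because the differential lowers homological degree and hence the $W$-degree by one). On the $L$ side, the Pieri rule for $\bK_{(i,1^{d-a-1})} L \otimes L \cong \bK_{(i,1^{d-a-1})} L \otimes \bK_{(1)} L$ says the constituents $\bK_\mu L$ are exactly those obtained by adding a single box to the partition $(i,1^{d-a-1})$, i.e.\ $\mu \in \{(i+1,1^{d-a-1}),\,(i,2,1^{d-a-2}),\,(i,1^{d-a})\}$. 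I would then cross-reference this list against the shapes that actually occur in $\bF_{i-1}$, whose summands have the form $\bK_{(i-1,1^{d-b-1})} L$. Matching both the $L$-shape and the forced $W$-exterior degree $i+d-a-2 = (i-1)+d-b-1$ pins down the index $b$, and I would check that only $b=a$ (giving $\mu = (i-1,1^{d-a-1})$ paired with $\bigwedge^{(i-1)+d-a-1}W$) and $b=a-1$ (giving $\mu = (i-1,1^{d-a})$ paired with $\bigwedge^{(i-1)+d-a}W$) are simultaneously consistent; the shape $(i,2,1^{d-a-2})$ never has the right first part to appear as a $\bF_{i-1}$ generator (which all begin with $i-1$), and the candidate $(i+1,\dots)$ is likewise excluded.

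The main obstacle is not the bookkeeping but justifying that these maps being \emph{permitted} by equivariance matches the displayed claim, and in particular ensuring that the hypothesis $i>1$ is what rules out an extra $L$-constituent: when $i=1$ the partition $(i,1^{d-a-1}) = (1^{d-a})$ degenerates so that the added-box candidates coincide or produce additional valid shapes (a column rather than a hook), so the clean two-term description fails and one gets the more general behavior already visible at the bottom of the resolution. I would therefore state the argument for $i \ge 2$ where $(i,1^{d-a-1})$ is a genuine hook with $i \ge 2$, so that adding a box to the first row versus adding a box at the foot of the first column are distinct and are the only two options landing in the correct homological degree of $\bF_{i-1}$. Everything else reduces to the Pieri rule and $G$-equivariance, both recalled in the preliminaries, so no genuinely new input is needed beyond this case analysis.
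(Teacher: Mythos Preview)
Your proposal has two genuine gaps.

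First and most seriously, you only argue which component maps are \emph{permitted} by $G$-equivariance; you never show that the two displayed maps are actually nonzero. But nonzeroness is the main content of the corollary and is exactly what is invoked later (see the proof of Theorem~\ref{thm:13neqn}). The paper establishes nonzeroness by tracing the differential back to the Koszul complex of $\cO_\cS$ on $\Gr(1,L)$: after Serre duality the relevant $\rH^{d-1}$-maps become the $\rH^0$-maps given by exterior multiplication $\bigwedge^{a-1}\cQ \otimes \cQ \to \bigwedge^a \cQ$ and $\bigwedge^{i+d-2-a} W^* \otimes W^* \to \bigwedge^{i+d-1-a} W^*$, which are visibly surjective on sections; dualizing, the original maps are injective and in particular nonzero. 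Pure representation theory cannot supply this step.

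Second, your equivariance argument is itself incorrect. The claim that only the $L\otimes W$ part of $A_1$ can contribute is false: $A_1 = \End(V)^*$ also contains the block $L\otimes L^*$ (dual to $\alpha$), and this block is precisely what produces the second displayed target $\bK_{i-1,1^{d-a}}L \otimes \bigwedge^{i+d-1-a}W$. Indeed, that target has the \emph{same} $W$-exterior degree as the source, so it cannot arise from multiplication by anything in $L\otimes W$. Your Pieri analysis also has the tensor factor on the wrong side: the differential is a $G$-map $U \to U'\otimes A_1$, so one must look for the source shape $(i,1^{d-a-1})$ inside $\bK_\mu L \otimes L$ (i.e.\ remove a box from the source), not look for $\mu$ inside $\bK_{(i,1^{d-a-1})}L \otimes L$; with the correct direction, the $L\otimes W$ block only accounts for the $b=a$ target, and you need $L\otimes L^*$ to reach $b=a-1$. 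The paper sidesteps all of this for the ``no other maps'' direction by invoking Theorem~\ref{thm:geometric}\eqref{item:geometric2}: the splitting $\xi = (\cR\otimes\cQ^*)\oplus(\cR\otimes W)$ yields a subcomplex filtration that forces the differential to decrease each exterior degree by at most one, which immediately restricts the target index to $a$ or $a-1$.
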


\begin{proof} Consider the Koszul complex of $\cO_{\cS}$ over the
  total space of $\End(V) \times \Gr(s,L)$. For simplicity, we work
  over $\Gr(s,L)$ by pushing forward along the projection (which is an
  equivalence since $\End(V) \times \Gr(s,L) \to \Gr(s,L)$ is
  affine). The degree $i+d-1$ component of the map above is obtained
  by applying $\rH^{d-1}$ to the map of sheaves
  \[
  \rS^{i+d-1} \cR \otimes \bigwedge^a \cQ^* \otimes
  \bigwedge^{i+d-1-a} W \to \rS^{i+d-2} \cR \otimes \bigwedge^{a-1}
  \cQ^* \otimes \bigwedge^{i+d-2-a} W \otimes \End(V)
  \]
  in this Koszul complex. The equations for $\cO_{\cS}$ are given by
  $\xi = \cR \otimes (\cQ^* \oplus W) \subset \End(V)$. In particular,
  we can restrict our attention to the map
  \begin{align*}
    \rS^{i+d-1} \cR \otimes \bigwedge^a \cQ^* \otimes
    \bigwedge^{i+d-1-a} W \to \begin{array}{c} \DS \rS^{i+d-2} \cR
      \otimes \bigwedge^{a-1} \cQ^* \otimes \bigwedge^{i+d-1-a} W
      \otimes (\cR \oplus \cQ^*) \\
      \DS \rS^{i+d-2} \cR \otimes \bigwedge^a \cQ^* \otimes
      \bigwedge^{i+d-2-a} W \otimes (\cR \oplus W) \end{array}.
  \end{align*}
Using Serre duality, this is the same as
  taking the dual map of applying $\rH^0$ to
  \begin{align*} \begin{array}{c} \DS \rS^{i-1} \cR^* \otimes
      \bigwedge^{a-1} \cQ \otimes \cQ \otimes \bigwedge^{i+d-1-a} W^* \\
      \DS \rS^{i-1} \cR^* \otimes \bigwedge^{a} \cQ \otimes
      \bigwedge^{i+d-2-a} W^* \otimes W^*\end{array} \to \rS^{i-1}
    \cR^* \otimes \bigwedge^{a} \cQ \otimes \bigwedge^{i+d-1-a} W^*
  \end{align*}
  Since the differentials in the Koszul complex are obtained via
  comultiplication, both of these maps are given by exterior
  multiplication. Hence the map on sections is surjective, which
  implies that our desired maps are injective (and hence nonzero).

  That there are no other nonzero maps follows from
  Theorem~\ref{thm:geometric}\eqref{item:geometric2}.
\end{proof}

\begin{proposition} \label{prop:23ntilde} If the characteristic of $K$
  is $0$, then the first few terms of the minimal free resolution
  ${\bf F}_\bullet$ of $\tilde{\cO}_{2,3,n}$ are:
\begin{align*}
  {\bf F}_0 &= A \oplus A(-1) \oplus A(-2)\\
  {\bf F}_1 &= \begin{array}{c} \bigwedge^2 L \otimes \bigwedge^2 W \\
    L \otimes W \end{array} \otimes A(-2) \oplus L \otimes W \otimes
  A(-3)\\
  {\bf F}_2 &= \begin{array}{c} \Sc_{2,1} L \otimes \bigwedge^3 W \\
    \bigwedge^3 L \otimes \Sc_{2,1} W \\ \rS^2 L \otimes \bigwedge^2
    W \end{array} \otimes A(-3) \oplus \begin{array}{c} \bigwedge^2(L
    \otimes W) \\ \bigwedge^3 L \otimes \Sc_{2,1} W \end{array}
  \otimes A(-4)\\
  {\bf F}_3 &= \begin{array}{c} \Sc_{3,1} L \otimes \bigwedge^4 W \\
    \Sc_{2,1,1} L \otimes \Sc_{2,1,1} W \\ \rS^3 L \otimes \bigwedge^3
    W \end{array} \otimes A(-4) \oplus \begin{array}{c} \Sc_{2,1,1} L
    \otimes \Sc_{2,1,1} W \\ \Sc_{2,1,1} L \otimes \Sc_{2,2} W \\ \rS^3
    L \otimes \bigwedge^3 W \\ \Sc_{2,1} L \otimes \Sc_{2,1}
    W \end{array}  \otimes A(-5)
\end{align*}
The ranks of these $\bF_i$ are the same for any field. Furthermore,
the regularity of $\tilde{\cO}_{2,3,n}$ is $2$.
\end{proposition}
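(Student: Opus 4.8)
The plan is to apply the geometric method of Theorem~\ref{thm:geometric} to $X = \Gr(2,L)$, which has dimension $s(d-s) = 2$, with $\xi = \cR\otimes\cQ^* \oplus \cR\otimes W$. Writing $\xi_1 = \cR\otimes\cQ^*$ and $\xi_2 = \cR\otimes W$, I would first establish that the higher direct images of $\Sym(\cS^*)$ vanish (this is the claim deferred from the previous proposition, and it must hold in every characteristic) and that $p_1$ is birational; granting this, Theorem~\ref{thm:geometric}\eqref{item:geometric1} identifies $\bF_\bullet$, with $\bF_i = \bigoplus_{j\ge 0}\rH^j(X;\bigwedge^{i+j}\xi)\otimes_K A(-i-j)$, as the minimal free resolution of $\tilde\cO_{2,3,n}$. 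The entire problem thus reduces to computing the groups $\rH^j(\Gr(2,3);\bigwedge^q\xi)$ for small $q$, with minimality of the differentials guaranteed by the theorem.

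For the explicit terms in characteristic $0$, I would decompose $\bigwedge^q\xi = \bigoplus_{p+r=q}\bigwedge^p\xi_1\otimes\bigwedge^r\xi_2$ and expand each factor by the Cauchy formula. Since $\rank\cR = 2$ and $\rank\cQ = 1$, only partitions with at most two rows survive, so each summand has the shape $\bL_{(a,b)}\cR\otimes(\cQ^*)^{\otimes c}$ tensored with a Weyl functor of $W$; the two $\cR$-factors coming from $\xi_1$ and $\xi_2$ are then merged by the Littlewood--Richardson rule. I would feed each resulting bundle $\Sc_\beta\cR\otimes\Sc_\alpha\cQ$ into Borel--Weil--Bott (Theorem~\ref{thm:bott}): form the $\GL(L)=\GL_3$ weight $\nu=(\alpha,\beta)$, test regularity of $\nu+\rho$ against $\rho=(2,1,0)$, and read off the cohomological degree $\ell(w)$ and the module $\Sc_\eta L$. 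Because $\dim X = 2$, cohomology lives only in degrees $0,1,2$, so only $q = i+j \le i+2$ contribute; running $q$ up to $5$ produces $\bF_0,\dots,\bF_3$ and, after bookkeeping of the $W$-factors, the stated tables. The regularity statement is then almost free: since $\rH^j$ vanishes for $j > \dim X = 2$, every generator of $\bF_i$ lies in degree $i + j$ with $j\le 2$, so $\reg\tilde\cO_{2,3,n}\le 2$, and the summand $A(-2)$ of $\bF_0$, which arises from $\rH^2(\Gr(2,3);\bigwedge^2\xi_1) = K$, shows this bound is attained.

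The main obstacle is the assertion that the ranks are characteristic-independent, together with the deferred higher-direct-image vanishing, because Borel--Weil--Bott is unavailable outside characteristic $0$. Here the dimensions of the $W$-factors are already characteristic-free by the hook-content formula \eqref{eqn:hookcontent}, so the issue is purely the cohomology over $\Gr(2,3)$ of the bundles built from $\cR$ and $\cQ$. I would recompute these groups in a characteristic-free way, imitating the argument for $s=1$ in Proposition~\ref{prop:1dntilde}: split the range of $q$, apply Kempf vanishing (Theorem~\ref{thm:kempf}) to the dominant bundles to force all but one cohomology group to vanish, and use Serre duality on $\Gr(2,3)$ with canonical bundle $\omega = \det\cR\otimes(\cQ^*)^{\otimes 2}$ to reduce the top cohomology to another application of Kempf vanishing.

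Since each $\bigwedge^q\xi$ is defined over $\Z$, its Euler characteristic is characteristic-independent; once the characteristic-free vanishing confines the cohomology of every summand to the single degree predicted in characteristic $0$, the individual dimensions are forced to agree with the characteristic-$0$ answer, and in particular the higher direct images of $\Sym(\cS^*)$ vanish in all characteristics. The delicate point, which I expect to require the most care, is verifying that the Kempf and Serre-duality vanishing genuinely covers every bundle that appears: the Cauchy and Littlewood--Richardson decompositions are only filtrations in positive characteristic, so one cannot simply transport the characteristic-$0$ splitting, and one must instead check directly that no summand falls outside the dominant range where a concentration theorem applies.
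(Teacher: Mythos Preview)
Your characteristic-$0$ computation and your regularity argument match the paper exactly: decompose $\bigwedge^q\xi$ via the Cauchy identity and Pieri/Littlewood--Richardson, feed each piece into Borel--Weil--Bott on $\Gr(2,3)$, and use $\dim\Gr(2,3)=2$ to bound the regularity.

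The divergence is entirely in the positive-characteristic claim. You propose to imitate the $s=1$ argument of Proposition~\ref{prop:1dntilde}, combining Kempf vanishing with Serre duality to force each relevant bundle into a single cohomological degree, and then to read off the ranks from the characteristic-free Euler characteristic. The paper does \emph{not} attempt this. Instead it observes two things: first, for $p>5$ the Borel--Weil--Bott computation still applies verbatim, since only $q\le 5$ is needed and the functors $\bL_\lambda$ and $\bK_\lambda$ agree once $|\lambda|!$ is invertible; second, for $p\in\{2,3,5\}$ the Cauchy filtration of $\bigwedge^q\xi$ for $q\le 5$ is insensitive to $\dim W$ once $\dim W\ge 5$, so the remaining cases reduce to a finite Macaulay~2 check on $\Gr(2,3)$ with $\dim W=5$.

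Your route has a real gap that you yourself flag but do not close. For $s=2$ the associated graded pieces have the form $\Sc_{(a,b)}\cR\otimes(\cQ^*)^{\otimes c}$ with $\cR$ of rank $2$, and it is not clear that Kempf vanishing plus one application of Serre duality pins every such bundle to a single cohomological degree in every characteristic; the $s=1$ argument worked precisely because $\cR$ was a line bundle. Moreover, even granting concentration for each graded piece, in positive characteristic the Cauchy and Littlewood--Richardson decompositions are only filtrations, and cohomology concentration of the subquotients does not by itself rule out nontrivial differentials in the associated spectral sequence, so you would still owe an argument that the total cohomology of $\bigwedge^q\xi$ has the predicted dimensions degree by degree. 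The paper's $p>5$ observation together with the small-prime computer check sidesteps both difficulties at once.
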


\begin{proof} Since $\dim \Gr(2,3) = 2$, the regularity of
  $\tilde{\cO}_{2,3,n}$ is at most 2 by
  Theorem~\ref{thm:geometric}\eqref{item:geometric1}. So the above
  reduces to calculating the cohomology of $\bigwedge^q \xi$ for $0
  \le q \le 5$, which we first do in characteristic 0. This is a
  straightforward, although tedious, application of the Cauchy
  identity, Pieri rule, and Borel--Weil--Bott theorem (all explained
  in Section~\ref{sec:charfree}), which we omit.

  Now assume that the field has characteristic $p>0$. If $p>5$, then
  we may still use Borel--Weil--Bott to calculate the cohomology of
  $\bigwedge^q \xi$ with $q \le 5$ (this reduces to the statement that
  the $n$th symmetric and divided power functors are naturally
  isomorphic when $n!$ is invertible). In the remaining cases $p \in
  \{2,3,5\}$, the cohomology calculation can be reduced to a finite
  calculation with Macaulay 2 \cite{M2}, which we explain. First, we
  have $\xi = \cR \otimes (\cQ^* \oplus W)$. Since we only go up to
  $\bigwedge^5 \xi$, we see that the terms which appear in the Cauchy
  filtration of $\bigwedge^i \xi$ are the same when $\dim W \ge 5$. So
  we only need to consider the case $\dim W = 5$. Finally, we only
  need to calculate $\rH^1$ and $\rH^2$ since we know the Euler
  characteristic. For $\bigwedge^5 \xi$, we only care about
  $\rH^2$. We use the following code:
\begin{verbatim}
A=ZZ/2[z_0,z_1,z_2];
m=matrix{{z_0,z_1,z_2}};
R = sheaf((ker m) ** A^{1});
Q = sheaf(A^{1});
xi = (R ** dual(Q)) ++ (R ++ R ++ R ++ R ++ R);
for i from 1 to 4 do (
     E = exteriorPower(i,xi);
     print (rank HH^1(E), rank HH^2(E)); )
print rank HH^2(exteriorPower(5,xi));
\end{verbatim}
This outputs the answers
\begin{verbatim}
(1, 0)
(45, 1)
(180, 15)
(310, 145)
705
\end{verbatim}
which is the expected answer. Then repeat the above with 2 replaced by
3 and 5.
\end{proof}

\section{Kalman varieties.} \label{sec:equations}

In this section we prove our main results, which include calculating
the minimal free resolution of $\cO_{1,2,n}$ and the equations of
$\cO_{1,3,n}$. During the course of our work, we discovered the
following conjecture.

\begin{conjecture} \label{conj:inductive}
  Fix $d$. For $s=1,\dots,d$, let $B_s =
  \tilde{\cO}_{s,d,n}(-\frac{s(s-1)}{2})$. There is a long exact
  sequence
  \[
  0 \to \cO_{1,d,n} \to B_1 \to B_2 \to \cdots \to B_d \to 0.
  \]
  Furthermore, the ideal of $\cO_{1,d,n}$ has minimal generators in
  degrees $d, d+1, \dots, \frac{d(d+1)}{2}$. The projective dimension
  of $\cO_{1,d,n}$ is $d(n-d)-d+1$ and its regularity is
  $\frac{d(d+1)}{2} - 1$.
\end{conjecture}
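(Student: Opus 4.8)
The strategy is to realize the conjectured sequence as a complex of pushforward modules and then read off the invariants from the resolutions produced by the geometric method of Section~\ref{section:geometric}. Each normalization is a pushforward, $\tilde{\cO}_{s,d,n} = (p_1)_*\cO_{\cS}$ along the birational map $\cS \to \cK_{s,d,n}$ over $\Gr(s,L)$, so every $B_s = \tilde{\cO}_{s,d,n}(-\binom{s}{2})$ is a Cohen--Macaulay $A$-module of codimension $s(n-d)$. First I would construct natural degree-preserving maps $\partial_s \colon B_s \to B_{s+1}$ together with the inclusion $\cO_{1,d,n} \injects B_1$ of a coordinate ring into its normalization, assembling the complex $\cO_{1,d,n} \to B_1 \to \cdots \to B_d$. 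The maps $\partial_s$ should be induced by the incidence correspondence $Z_s \subset \Gr(s,L) \times \Gr(s+1,L)$ of flags $U \subset U'$ of invariant subspaces: pulling the tautological Kalman data back to $Z_s$ and pushing it forward to $\Gr(s+1,L)$ gives a comparison map of the two pushforward algebras, and the twist is forced by the degree discrepancy $\binom{s+1}{2} - \binom{s}{2} = s$ of this correspondence. Exactness at the left encodes the fact that $\cK_{s+1,d,n}$ is the non-normal locus of $\cK_{s,d,n}$, so that $\im \partial_1 = \tilde{\cO}_{1,d,n}/\cO_{1,d,n}$ is supported on $\cK_{2,d,n}$; inductively the images $K_{s+1} := \im\partial_s = \coker(K_s \injects B_s)$ are supported on $\cK_{s+1,d,n}$, terminating with $K_d = B_d = \cO_{d,d,n}(-\binom{d}{2})$ because $\cK_{d,d,n}$ is the affine space with linear ideal $L \otimes W$.

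\textbf{The main obstacle} is proving that this complex is exact in the middle, $\ker \partial_s = \im \partial_{s-1}$; this is exactly the inductive structure that is currently available only for $d \le 3$. I would establish it sheaf-theoretically by identifying the homology with direct images on the Grassmannians and forcing them to vanish. In characteristic $0$ the cohomology of each $\bigwedge^q \xi$ is computed by Borel--Weil--Bott (Theorem~\ref{thm:bott}), and feeding this through the Cauchy and Pieri rules of Section~\ref{sec:charfree} reduces the claim to a comparison of Schur functors of $L$ and $W$ in each multidegree; this is how I would first treat $n = d+1$, where $W$ is one-dimensional and the bookkeeping collapses, reproducing the characteristic-$0$ sketch for that case. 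Passing to general $n$ and then to positive characteristic is the crux: one would reduce to a bounded range of $\dim W$ using the stabilization of the Cauchy filtration of $\bigwedge^q \xi$ (as in the Macaulay2 verification behind Proposition~\ref{prop:23ntilde}), but producing a single uniform argument rather than a case analysis is what keeps the statement a conjecture.

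Granting exactness, the numerical assertions follow formally. Splitting the sequence into short exact sequences $0 \to K_s \to B_s \to K_{s+1} \to 0$, with $B_s$ in cohomological position $s-1$, and iterating the usual estimates yields
\[
\pdim \cO_{1,d,n} \le \max_s\big(\pdim B_s - (s-1)\big) = d(n-d) - d + 1
\]
and
\[
\reg \cO_{1,d,n} \le \max_s\big(\reg B_s + (s-1)\big) = \binom{d+1}{2} - 1 ,
\]
both maxima occurring at $s = d$, using $\pdim B_s = s(n-d)$ and $\reg B_s = \reg \tilde{\cO}_{s,d,n} + \binom{s}{2} \le s(d-s) + \binom{s}{2}$ (the last bound from Theorem~\ref{thm:geometric}\eqref{item:geometric1}, since $\dim \Gr(s,L) = s(d-s)$). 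For the reverse inequalities and the generation degrees I would pass to the total complex of the minimal free resolutions of the $B_s$, which resolves $\cO_{1,d,n}$ but is not minimal. Since $B_d = \cO_{d,d,n}(-\binom{d}{2})$ is resolved by the twisted Koszul complex on $L \otimes W$, its top term $\bigwedge^{d(n-d)}(L \otimes W)$ sits in homological degree $d(n-d) - d + 1$ with nothing else of that degree present when $n > d+1$, pinning the projective dimension, while its term $\bigwedge^{d}(L \otimes W) \otimes A(-\binom{d+1}{2})$ sits in homological degree $1$ and gives the top minimal generators in degree $\binom{d+1}{2} = \reg \cO_{1,d,n} + 1$. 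The bottom generators, in degree $d$, are the first syzygies of $B_1 = \tilde{\cO}_{1,d,n}$ from Proposition~\ref{prop:1dntilde}, namely the degree-$d$ maximal minors of the reduced Kalman matrix \eqref{eqn:reducedkalman} obtained from the block $\gamma$ alone, and the intermediate degrees come from minors involving the higher blocks $\gamma\alpha^k$. The final task is to show that the differentials of the total complex cancel none of these extremal Betti numbers, which the Littlewood--Richardson bookkeeping of Section~\ref{sec:charfree} handles by showing that the relevant graded pieces cannot be mapped isomorphically onto one another.
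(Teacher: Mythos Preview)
The statement is a \emph{conjecture}, and the paper does not prove it in general; it establishes only the cases $d\le 3$ (Theorem~\ref{thm:inductive}) and sketches $n=d+1$ in characteristic $0$ (Section~\ref{sec:conjsketch}). You are explicit that your write-up is a strategy with a missing step, and you correctly locate that step: exactness at the interior terms. On that level your outline matches the paper's philosophy---split into short exact sequences $0\to K_s\to B_s\to K_{s+1}\to 0$, compare minimal resolutions, and read off invariants via mapping cones.

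Where you diverge from the paper is in the \emph{construction of the maps} $\partial_s\colon B_s\to B_{s+1}$. You propose to obtain them from the incidence correspondence $Z_s\subset\Gr(s,L)\times\Gr(s+1,L)$ by pulling back and pushing forward. This is not what the paper does, and as written it is not clearly well-defined: over a point $(U,U')\in Z_s$ the condition $\phi(U')\subseteq U'$ does not force $\phi(U)\subseteq U$, nor conversely, so there is no containment between $\pi_1^*\cS_s$ and $\pi_2^*\cS_{s+1}$ on $Z_s$, and hence no obvious comparison map of structure sheaves. Your justification of the twist $\binom{s+1}{2}-\binom{s}{2}=s$ as ``the degree discrepancy of this correspondence'' is therefore unmoored. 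The paper instead produces the maps purely module-theoretically: it writes down the minimal presentation of $\tilde{\cO}_{s,d,n}$ via Theorem~\ref{thm:geometric}, throws away the top-degree generator(s) of $\bF_0$ to define a submodule ($M$ for $d=3$, $C_s$ for $n=d+1$), and then recognises the quotient as exactly that submodule of the next normalization by matching presentations. This is concrete but intrinsically case-by-case, which is why the general statement remains open.

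Your derivation of the numerical assertions from exactness is fine: the bounds $\pdim B_s=s(n-d)$ (granted Cohen--Macaulayness, which the paper only knows in characteristic $0$ or for the small cases) and $\reg\tilde{\cO}_{s,d,n}\le s(d-s)$ (from $\dim\Gr(s,L)$) give the claimed inequalities, and your identification of the extremal Betti contributions---degree $d$ from $\bF^{(1)}_1$ via Proposition~\ref{prop:1dntilde}, degree $\binom{d+1}{2}$ from $\bigwedge^d(L\otimes W)\otimes A(-\binom{d+1}{2})$ in the Koszul resolution of $B_d$---matches how the paper extracts the equations in Theorems~\ref{thm:12n} and~\ref{thm:13neqn}. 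The remaining assertion, that no cancellation kills these extremes in the total complex, is exactly the kind of term-by-term check (Corollary~\ref{cor:1dntilde}, Proposition~\ref{prop:M}) that the paper can carry out only for small $d$.
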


The rest of the section will imply that this conjecture holds for $d
\le 3$, so we record the result.

\begin{theorem} \label{thm:inductive} Conjecture~\ref{conj:inductive}
  holds when $d \le 3$. In particular, there are exact sequences
  \[
  0 \to \cO_{1,2,n} \to \tilde{\cO}_{1,2,n} \to \cO_{2,2,n}(-1) \to 0
  \]
  \[
  0 \to \cO_{1,3,n} \to \tilde{\cO}_{1,3,n} \to
  \tilde{\cO}_{2,3,n}(-1) \to \cO_{3,3,n}(-3) \to 0.
  \]
\end{theorem}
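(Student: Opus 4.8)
The plan is to establish the two exact sequences directly, since the general projective-dimension and regularity claims of Conjecture~\ref{conj:inductive} for $d \le 3$ follow formally once the sequences are in hand (the regularity bounds come from those of the $\tilde{\cO}_{s,d,n}$ established in Proposition~\ref{prop:1dntilde} and Proposition~\ref{prop:23ntilde}, together with the grading shifts, and the projective dimension then follows from the codimension count $s(n-d)$). So the heart of the matter is constructing the maps and proving exactness.

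First I would treat the case $d=2$, which is the cleaner one. Here $s$ ranges over $\{1,2\}$, and $\cK_{2,2,n}$ is an affine space whose coordinate ring $\cO_{2,2,n}$ is a polynomial ring on which the geometry is transparent. The recollement is that $p_1\colon \cS \to \cK_{1,2,n}$ is a birational morphism whose non-isomorphism locus is exactly $\cK_{2,2,n}$ (the singular and non-normal locus, by the discussion following the definition of the Kalman variety). The idea is to produce the natural map $\cO_{1,2,n} \to \tilde{\cO}_{1,2,n}$ (the map to the normalization) and identify its cokernel. Since the normalization map is an isomorphism away from $\cK_{2,2,n}$, the cokernel is supported on $\cK_{2,2,n}$, and I would compute it as a module supported there, showing it is $\cO_{2,2,n}(-1)$ by comparing it with the conductor ideal or, more efficiently, by a direct sheaf-cohomology computation: the difference between $\rH^0$ of $\Sym \cS^*$ and the structure sheaf of $\cK_{2,2,n}$ is governed by the pushforward along $p_1$, and the grading shift $(-1)$ is forced by the $P$-equivariance and the degree in which $\cK_{2,2,n}$ sits inside $\cK_{1,2,n}$.

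For $d=3$, the same philosophy applies but with one more layer. Now I would use both nontrivial normalizations $\tilde{\cO}_{1,3,n}$ and $\tilde{\cO}_{2,3,n}$, connected by a map $\tilde{\cO}_{1,3,n} \to \tilde{\cO}_{2,3,n}(-1)$ reflecting the stratification $\cK_{3,3,n} \subset \cK_{2,3,n} \subset \cK_{1,3,n}$, with $\cK_{3,3,n}$ again an affine space. The strategy is to splice two short exact sequences: one relating $\cO_{1,3,n}$ to its normalization with cokernel supported on $\cK_{2,3,n}$, and one relating that cokernel to $\tilde{\cO}_{2,3,n}(-1)$ with its own cokernel supported on $\cK_{3,3,n}$, identified as $\cO_{3,3,n}(-3)$. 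The grading shifts $-1$ and $-3$ match the triangular-number pattern $\frac{s(s-1)}{2}$ predicted by the conjecture, and I would verify them by equivariance together with the explicit resolutions in Proposition~\ref{prop:23ntilde}. Exactness of the spliced four-term sequence reduces to exactness of the two short pieces and the compatibility of the connecting map.

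The hard part will be proving that the cokernel of each normalization map is exactly the next $\tilde{\cO}$ (or affine-space ring) with the correct shift, rather than merely a module with the right support and Hilbert function. The cleanest route is to compare minimal free resolutions: I would use the explicit terms of $\bF_\bullet$ for the $\tilde{\cO}_{s,3,n}$ from Proposition~\ref{prop:1dntilde} and Proposition~\ref{prop:23ntilde}, together with the differential description in Corollary~\ref{cor:1dntilde}, and check that the alternating sum of the proposed sequence is consistent term by term in each multidegree under the $\GL(L)\times\GL(W)$-action; the representation-theoretic bookkeeping (via the Cauchy and Pieri rules of Section~\ref{sec:charfree}) then pins down the maps up to scalar, and $P$-equivariance forces those scalars to be nonzero, giving genuine exactness rather than just equality of characters.
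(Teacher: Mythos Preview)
Your outline is in the right spirit—the paper does identify the cokernels by comparing minimal presentations coming from Proposition~\ref{prop:1dntilde} and Proposition~\ref{prop:23ntilde}—but your final step contains a genuine gap. You propose to verify exactness by (i) checking that the alternating sum of $G$-characters vanishes, (ii) noting that the maps are determined up to scalar by equivariance, and (iii) concluding that $P$-equivariance forces the scalars to be nonzero, ``giving genuine exactness rather than just equality of characters.'' But (iii) does not follow: a complex with the correct Euler characteristic and nonzero differentials need not be exact. Moreover, $P$-equivariance alone does not force the relevant scalars to be nonzero; in the $d=3$ case the paper has to use a geometric argument (exhibiting matrices in $\cK_{1,3,n}$ on which the candidate equations fail to vanish) to show that certain components of the presentation map are nonzero.

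The paper's argument is more direct and avoids this issue. One writes down the minimal presentation of $\tilde{\cO}_{1,d,n}$, which has generators $A \oplus A(-1) \oplus \cdots \oplus A(-d+1)$, and determines for each first-syzygy summand whether its component landing in $A$ is zero. Those syzygies that map only into $A(-1) \oplus \cdots \oplus A(-d+1)$ give a presentation of the quotient $\tilde{\cO}_{1,d,n}/\cO_{1,d,n}$, which one then \emph{recognizes}. For $d=2$ the resulting presentation $L \otimes W \otimes A(-2) \to A(-1)$ is immediately that of $\cO_{2,2,n}(-1)$. For $d=3$ the quotient is \emph{not} $\tilde{\cO}_{2,3,n}(-1)$ itself but $M(-1)$, where $M \subset \tilde{\cO}_{2,3,n}$ is the submodule generated by the degree-$0$ and degree-$1$ generators; a second presentation analysis gives $\tilde{\cO}_{2,3,n}/M \cong \cO_{3,3,n}(-2)$, and splicing yields the four-term sequence. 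This intermediate module $M$ is the concrete object missing from your sketch: the embedding of the normalization cokernel into $\tilde{\cO}_{2,3,n}(-1)$ is obtained by matching presentations, not by support plus character bookkeeping.
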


For more precise statements about the number of equations, see
Theorem~\ref{thm:12n} and Theorem~\ref{thm:13neqn}.

We expect that the methods used in these cases will extend to any
given value of $d$, but we have been unable to properly organize the
combinatorics in the case of general $d$. However, we are able to
prove Conjecture~\ref{conj:inductive} in the case $n=d+1$ and
$\mathrm{char}\, K = 0$. We provide a brief sketch of this case in
Section~\ref{sec:conjsketch}.

We point out that we were not able to check the conjecture
computationally even for the first nontrivial case $d=4$ and $n=6$. 

\subsection{Syzygies for $d=2$.}

\begin{theorem} \label{thm:12n} The terms of the minimal free
  resolution $\bF_\bullet$ of $\cO_{1,2,n}$ are given by
  \begin{align*} {\bf F}_i &= \det L \otimes \rD^{i-1} L \otimes
    \bigwedge^{i+1} W \otimes A(-i-1)\\
    & \quad \oplus (\bigwedge^{i+1}(L \otimes W)) / (\rD^{i+1} L
    \otimes \bigwedge^{i+1} W) \otimes A(-i-2)  \quad (1 \le i \le  n-3)\\
    {\bf F}_i &= \bigwedge^{i+1}(L \otimes W) \otimes A(-i-2) \quad
    (n-2 \le i \le 2n-5).
\end{align*}
In particular, the projective dimension of $\cO_{1,2,n}$ is $2n-5$ and
it has regularity $2$.
\end{theorem}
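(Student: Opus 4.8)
The plan is to feed the short exact sequence
\[
0 \to \cO_{1,2,n} \to \tilde{\cO}_{1,2,n} \to \cO_{2,2,n}(-1) \to 0
\]
of Theorem~\ref{thm:inductive} into the long exact sequence of $\Tor^A(-,K)$, using the two outer resolutions that we already control; since everything in sight is $\GL(L)\times\GL(W)$-equivariant, knowing the graded vector spaces $\Tor_i(\cO_{1,2,n},K)$ as representations determines the free modules $\bF_i$. The middle term is resolved by $\bF_\bullet$ of Proposition~\ref{prop:1dntilde} with $d=2$, so for $1\le i\le n-2$ its $i$-th $\Tor$ is the single-internal-degree module $\det L \otimes \rD^{i-1}L \otimes \bigwedge^{i+1}W \oplus \rD^i L \otimes \bigwedge^i W$ (the $a=0$ and $a=1$ pieces), sitting in degree $i+1$; the $a=0$ summand is present exactly when $i\le n-3$. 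The right-hand term $\cO_{2,2,n}(-1)$ is the coordinate ring of affine space cut out by $L\otimes W\subset A_1$ (the case $s=d$), hence is resolved by the Koszul complex on $L\otimes W$ twisted by $-1$, and $\Tor_i(\cO_{2,2,n}(-1),K)=\bigwedge^i(L\otimes W)$ in degree $i+1$.

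First I would extract $\Tor_i(\cO_{1,2,n},K)$ from the exact sequence
\[
0 \to \coker\pi_*^{(i+1)} \to \Tor_i(\cO_{1,2,n},K) \to \ker\pi_*^{(i)} \to 0,
\]
where $\pi_*^{(i)}\colon \Tor_i(\tilde{\cO}_{1,2,n},K)\to \Tor_i(\cO_{2,2,n}(-1),K)$ is induced by the surjection. Because $\coker\pi_*^{(i+1)}$ lives in internal degree $i+2$ and $\ker\pi_*^{(i)}$ in degree $i+1$, this extension splits as graded vector spaces, so it suffices to compute the kernel and cokernel of $\pi_*$. (Equivalently, one forms the reduced mapping cone of a chain map lifting $\pi$.)

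The heart of the matter is identifying $\pi_*^{(i)}$ as a map of representations. In characteristic $0$, decompose $\bigwedge^i(L\otimes W)=\bigoplus_{\lambda\vdash i}\Sc_\lambda L\otimes \Sc_{\lambda'}W$ by the skew Cauchy identity (only $\lambda$ with at most two rows survive). The $a=0$ summand $\det L\otimes \rD^{i-1}L\otimes \bigwedge^{i+1}W$ has $W$-degree $i+1$, while every constituent of the target has $W$-degree $i$, so by $\GL(W)$-equivariance $\pi_*^{(i)}$ kills it, contributing it to $\ker\pi_*^{(i)}$; the $a=1$ summand $\rD^i L\otimes \bigwedge^i W$ maps into the unique matching isotypic piece $\Sc_{(i)}L\otimes\Sc_{(1^i)}W=\rS^i L\otimes\bigwedge^i W$ of the target. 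The main obstacle is to show this last map is an isomorphism rather than zero. I expect to settle it exactly as in the proof of Corollary~\ref{cor:1dntilde}: realize $\pi_*$ on the Koszul level over $\Gr(1,L)\cong\P(L)$, apply $\rH^{d-1}$, pass to sections via Serre duality, and observe that the comparison is then honest exterior/symmetric multiplication, which is nonzero; two irreducibles related by a nonzero equivariant map are isomorphic. Granting this, $\ker\pi_*^{(i)}=\det L\otimes\rD^{i-1}L\otimes\bigwedge^{i+1}W$ and $\coker\pi_*^{(i)}=\bigwedge^i(L\otimes W)/(\rS^i L\otimes\bigwedge^i W)$. Substituting the shift of the latter into the split extension gives precisely the two summands of $\bF_i$ in the theorem (with $\rS$ and $\rD$ interchangeable in characteristic $0$); the dichotomy $1\le i\le n-3$ versus $n-2\le i\le 2n-5$ is exactly the simultaneous vanishing, once $i\ge n-2$, of the $a=0$ summand of $\Tor_i(\tilde{\cO}_{1,2,n},K)$ and of the cancelled image coming from the $a=1$ summand of $\Tor_{i+1}(\tilde{\cO}_{1,2,n},K)$, both governed by the bound $1\le i\le n-2$ of Proposition~\ref{prop:1dntilde}.

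Finally, the numerical assertions are immediate: the last nonzero module is $\bF_{2n-5}=\bigwedge^{2n-4}(L\otimes W)\otimes A(-2n+3)$, the top exterior power of $L\otimes W$ and hence nonzero, while $\bigwedge^{i+1}(L\otimes W)=0$ for $i>2n-5=\dim(L\otimes W)-1$, giving $\pdim\cO_{1,2,n}=2n-5$; and each $\bF_i$ is generated in degrees $i+1$ and $i+2$, so $\reg\cO_{1,2,n}=2$. For the characteristic-free statement I would verify that the comparison map $\rD^i L\otimes\bigwedge^i W\to\bigwedge^i(L\otimes W)$, a natural transformation defined over $\Z$, remains a split injection integrally; since $\dim\rD^i L=\dim\rS^i L$, the Betti numbers are then independent of the characteristic, as claimed.
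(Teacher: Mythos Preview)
Your approach is essentially the paper's: build the resolution of $\cO_{1,2,n}$ as a reduced mapping cone of the comparison $\tilde{\bF}_\bullet\to\bG_\bullet$ lifting $\tilde{\cO}_{1,2,n}\twoheadrightarrow\cO_{2,2,n}(-1)$, and identify which pieces cancel. Two points deserve comment.

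First, invoking Theorem~\ref{thm:inductive} for the short exact sequence is circular: in the paper's logical order, that theorem is a \emph{consequence} of the present result (and of Theorem~\ref{thm:13neqn}). The paper instead establishes the sequence at the start of the proof, by reading it off the presentation of $\tilde{\cO}_{1,2,n}$ in Proposition~\ref{prop:1dntilde} and noting (via Theorem~\ref{thm:geometric}\eqref{item:geometric2} or by $G$-equivariance) that $\bigwedge^2 L\otimes\bigwedge^2 W\otimes A(-2)\to A(-1)$ vanishes. You should do the same.

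Second, for the key cancellation you and the paper argue slightly differently. Your $\GL(W)$-degree count showing that the $a=0$ piece $\det L\otimes\rD^{i-1}L\otimes\bigwedge^{i+1}W$ maps to zero in $\bigwedge^i(L\otimes W)$ is clean and correct. For the injectivity of $\rD^i L\otimes\bigwedge^i W\to\bigwedge^i(L\otimes W)$, however, you only gesture at adapting the \emph{technique} of Corollary~\ref{cor:1dntilde}; that corollary itself concerns the differentials inside $\tilde{\bF}_\bullet$, not the comparison map $\psi_i$. The paper's argument is a short induction on $i$: by minimality of $\tilde{\bF}_\bullet$ the differential is injective on $\rD^i L\otimes\bigwedge^i W$, and by Corollary~\ref{cor:1dntilde} the component $\bK_{i,1}L\otimes\bigwedge^{i+1}W\to\rD^{i-1}L\otimes\bigwedge^{i-1}W\otimes A(-i)$ vanishes; commutativity of $\psi$ then forces $\psi_i|_{\rD^iL\otimes\bigwedge^iW}$ to be injective given the inductive hypothesis on $\psi_{i-1}$. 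This argument is characteristic-free from the outset, so your separate integral check at the end becomes unnecessary.
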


\begin{proof} From Proposition~\ref{prop:1dntilde},
  $\tilde{\cO}_{1,2,n}$ has the following presentation:
  \[
  \begin{array}{c} \bigwedge^2 L \otimes \bigwedge^2 W \otimes A(-2)
    \\ L \otimes W \otimes A(-2) \end{array} \to \begin{array}{c} A \\
    A(-1) \end{array} \to \tilde{\cO}_{1,2,n} \to 0.
  \]
  The map $\bigwedge^2 L \otimes \bigwedge^2 W \otimes A(-2) \to
  A(-1)$ is 0. We can either appeal to
  Theorem~\ref{thm:geometric}\eqref{item:geometric2} or use that no
  such $G$-equivariant map exists. Hence the presentation for
  $\tilde{\cO}_{1,2,n} / \cO_{1,2,n}$ must be $L \otimes W \otimes
  A(-2) \to A(-1)$, and we conclude that the quotient is
  $\cO_{2,2,n}(-1)$.

  Let $\tilde{\bF}_\bullet$ be the minimal free resolution of
  $\tilde{\cO}_{1,2,n}$ from Proposition~\ref{prop:1dntilde} and let
  $\bG_\bullet$ be the Koszul complex on $L \otimes W$ resolving
  $\cO_{2,2,n}(-1)$. We can lift the quotient map $\tilde{\cO}_{1,2,n}
  \to \cO_{2,2,n}(-1)$ to get a map of complexes $\tilde{\bF}_\bullet
  \to \bG_\bullet$. The $i$th term of this map is
  \[
  \begin{array}{c} \rD^i L \otimes \bigwedge^i W \otimes A(-i-1) \\
    \bK_{i,1} L \otimes \bigwedge^{i+1} W \otimes A(-i-1) \end{array}
  \to \bigwedge^i(L \otimes W) \otimes A(-i-1). 
  \]
  We claim that the map from $\rD^i L \otimes \bigwedge^i W$ is an
  inclusion and the map from $\bK_{i,1} L \otimes \bigwedge^{i+1} W$
  is 0. By minimality of $\tilde{\bF}_\bullet$, the map $\rD^i L
  \otimes \bigwedge^i W \to \tilde{\bF}_{i-1}$ is injective, and by
  Corollary~\ref{cor:1dntilde}, the map $\bK_{i,1} L \otimes
  \bigwedge^{i+1} W \to \rD^{i-1} L \otimes \bigwedge^{i-1} W \otimes
  A(-i)$ is zero. By induction on $i$, we get the claim.

  Therefore we know exactly what the minimal cancellations in the
  comparison map $\tilde{\bF}_\bullet \to \bG_\bullet$ are, which
  gives the desired resolution $\bF_\bullet$ via a mapping cone. 
\end{proof}

\begin{remark} In the above proof, we know from general principles
  that the comparison maps $\tilde{\bF}_i \to \bG_i$ must be nonzero
  since both $\tilde{\cO}_{1,2,n}$ and $\cO_{2,2,n}$ are
  Cohen--Macaulay (see the proof of \cite[Proposition
  2.3]{posets}). So one can deduce the required cancellations using
  just representation theory (at least in characteristic 0) without
  understanding the differentials.
\end{remark}

\subsection{Equations for $s=1$ and $d=3$.}

In Proposition~\ref{prop:23ntilde}, we do not know how to write down
the $\Z$-forms for the representations of $G$ involved, so we just
switch to the notation $(\lambda; \mu)$ to mean some $\Z$-form of the
module $\Sc_\lambda L \otimes \Sc_\mu W$ and we also write $(-i)$ in
place of $\otimes A(-i)$. 

Let $M$ be the submodule of $\tilde{\cO}_{2,3,n}$ generated by $A
\oplus A(-1)$. We will show that there exist short exact sequences
\[
0 \to \cO_{1,3,n} \to \tilde{\cO}_{1,3,n} \to M(-1) \to 0, \quad \quad
\quad 0 \to M \to \tilde{\cO}_{2,3,n} \to \cO_{3,3,n}(-2) \to 0,
\]
and use a mapping cone to get the equations for $\cO_{1,3,n}$. 

\begin{proposition} \label{prop:M}
The beginning of the minimal $A$-free resolution of $M$ looks like
\[
\begin{array}{c} (2,1;1^3)(-3) \\ (2;1^2)(-3) \\ (1^3;2,1)(-3) \\
  (1^3;2,1)(-4) \\ (1^3;3)(-5) \end{array} \to \begin{array}{c} (1^2;
  1^2)(-2) \\ (1;1)(-2) \end{array} \to \begin{array}{c} A \\
  A(-1) \end{array} \to M \to 0.
\]
Furthermore, the projective dimension of $M$ is $3n - 10$ and the
regularity of $M$ is $3$.
\end{proposition}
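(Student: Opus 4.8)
The plan is to resolve $M$ by the mapping-cone technique used for Theorem~\ref{thm:12n}, applied to the short exact sequence
\[
0 \to M \to \tilde{\cO}_{2,3,n} \to \cO_{3,3,n}(-2) \to 0 .
\]
So the first task is to establish this sequence. By definition $M$ is generated by the degree $\le 1$ part of the minimal generators $A \oplus A(-1) \oplus A(-2)$ of $\tilde{\cO}_{2,3,n}$ (Proposition~\ref{prop:23ntilde}), so the cokernel $C = \tilde{\cO}_{2,3,n}/M$ is cyclic, generated in degree $2$. The degree-$3$ first syzygy $L\otimes W \otimes A(-3)$ appearing in $\bF_1$ of Proposition~\ref{prop:23ntilde} expresses $(L\otimes W)$ times the degree-$2$ generator as an element of $M$, so $L\otimes W$ annihilates the generator of $C$ and we obtain a surjection $\cO_{3,3,n}(-2) = (A/(L\otimes W))(-2) \twoheadrightarrow C$. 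Since $\cO_{3,3,n}$ is a domain, I would finish by checking that $\dim C$ equals $\dim \cK_{3,3,n}$, so that a nonzero proper quotient is impossible; this is where the inductive geometry (the degree-$2$ generator being supported precisely on $\cK_{3,3,n}$) enters, and I expect pinning down the cokernel exactly to require the most care on the setup side.

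Granting the sequence, the second task is the comparison map. Let $\tilde{\bF}_\bullet$ be the resolution of $\tilde{\cO}_{2,3,n}$ from Proposition~\ref{prop:23ntilde}, and let $\bG_\bullet$ be the Koszul complex on the regular sequence $L\otimes W$ of linear forms, so that $\bG_i(-2) = \bigwedge^i(L\otimes W)\otimes A(-i-2)$ resolves $\cO_{3,3,n}(-2)$. Lifting the quotient map to $\psi\colon \tilde{\bF}_\bullet \to \bG_\bullet(-2)$ and taking the mapping cone produces, after the degree shift, a (generally non-minimal) resolution of $M$ with $i$th term $\tilde{\bF}_i \oplus \bG_{i+1}(-2)$. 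The heart of the matter is to determine which summands cancel. Working in characteristic $0$ and using $G$-equivariance, I would match the irreducible constituents of $\bG_i(-2)$ — the Cauchy decomposition of $\bigwedge^i(L\otimes W)$, in internal degree $i+2$ — against the top-degree (degree $i+2$) constituents of $\tilde{\bF}_i$, which are its regularity-$2$ generators. For $i=1,2,3$ this predicts cancellation of $(1;1)(-3)$, of $\bigwedge^2(L\otimes W)(-4)$, and of $(3;1^3)(-5)\oplus(2,1;2,1)(-5)$, whereas $(1^3;3)(-5)$ has no matching constituent in $\tilde{\bF}_3$ and survives. This reproduces exactly the claimed $\bF_0,\bF_1,\bF_2$ of $M$.

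For the cancellations to occur I must show the relevant components of $\psi$ are nonzero; by Schur's lemma together with multiplicity one, each nonzero matching component is then an isomorphism, so it cancels. I expect this nonvanishing to be the main obstacle. The cleanest route is the Cohen--Macaulay principle invoked in the remark after Theorem~\ref{thm:12n}: both $\tilde{\cO}_{2,3,n}$ and $\cO_{3,3,n}$ are Cohen--Macaulay, which forces $\psi$ to be nonzero on the matching isotypic pieces without computing differentials. Failing that, one computes the Koszul differentials directly via Serre duality and surjectivity on sections, exactly as in Corollary~\ref{cor:1dntilde}.

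Finally the numerical claims. The shifted mapping cone has length $\max(\pdim \tilde{\cO}_{2,3,n},\, \pdim \cO_{3,3,n}(-2)-1)$; since $\tilde{\cO}_{2,3,n}$ is Cohen--Macaulay of codimension $2(n-3)$ and $\cO_{3,3,n}$ is a complete intersection of the $3(n-3)$ linear forms $L\otimes W$, these are $2n-6$ and $3n-10$. For $n\ge 5$ the top Koszul term $\bigwedge^{3n-9}(L\otimes W)\otimes A(-3n+7)$ survives every cancellation, since cancellations pair $\bG$-summands only with $\tilde{\bF}$-summands and $\tilde{\bF}$ vanishes beyond homological degree $2n-6$; hence $\pdim M = 3n-10$. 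For the regularity, the short exact sequence gives $\reg M \le \max(\reg \tilde{\cO}_{2,3,n},\, \reg \cO_{3,3,n}(-2)+1) = \max(2,3) = 3$, while the surviving minimal syzygy $(1^3;3)(-5)$ in $\bF_2$ (internal degree $5$, homological degree $2$) gives $\reg M \ge 3$, so $\reg M = 3$.
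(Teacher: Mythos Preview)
Your overall plan coincides with the paper's: establish the short exact sequence $0 \to M \to \tilde{\cO}_{2,3,n} \to \cO_{3,3,n}(-2) \to 0$, then do a mapping cone with cancellation. Two points of execution differ.

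For the cokernel, the paper's argument is shorter than yours. Minimality of the presentation of $\tilde{\cO}_{2,3,n}$ forces the degree-$2$ syzygies $(1^2;1^2)(-2)$ and $(1;1)(-2)$ to map to zero in $A(-2)$, so the presentation of $\tilde{\cO}_{2,3,n}/M$ is exactly $(1;1)(-3)\to A(-2)$, which is the presentation of $\cO_{3,3,n}(-2)$. No dimension or support argument is needed.

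For the cancellations, the Cohen--Macaulay principle you cite is not strong enough here. In the remark after Theorem~\ref{thm:12n} it works because only one isotypic type of $\tilde{\bF}_i$ can match anything in $\bG_i$, so nonvanishing of $\psi_i$ forces that single component to be nonzero. In the present situation $\bG_2(-2)=(2;1^2)(-4)\oplus(1^2;2)(-4)$ has \emph{two} matchable isotypic components against $\tilde{\bF}_2$, and nonvanishing of $\psi_2$ as a whole does not preclude one of them being zero; the same issue occurs at $\psi_3$ with $(3;1^3)(-5)$ and $(2,1;2,1)(-5)$. The paper handles this differently: it observes that inside the resolution of $\tilde{\cO}_{2,3,n}$, the differentials $(1^2;2)(-4)\to(1;1)(-3)$ and $(2;1^2)(-4)\to(1;1)(-3)$ are exactly the Koszul relations on the linear generators $(1;1)(-3)$. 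Since $\psi_1$ is the identity on $(1;1)(-3)$, the compatibility $\psi_1\, d_{\tilde{\bF}}=d_{\bG}\,\psi_2$ then forces $\psi_2$ to be the identity on these two summands, and the identification propagates to the degree-$5$ terms at the next step. Your fallback of computing differentials directly via Serre duality would eventually get there, but identifying this Koszul tail is the paper's mechanism and is more efficient.

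Your treatment of the projective dimension and regularity is fine and matches the paper's conclusions.
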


\begin{proof} 
  % The proof is similar to that of Theorem~\ref{thm:12n}. Let us just
  % mention the main points.
  The presentation of $\tilde{\cO}_{2,3,n}$ is
  \[
  \begin{array}{c} (1^2;1^2)(-2) \\ (1;1)(-2) \\ (1;1)(-3) \end{array}
  \to \begin{array}{c} A \\ A(-1) \\ A(-2) \end{array} \to
  \tilde{\cO}_{2,3,n} \to 0.
  \]
  By minimality, the maps from $(1^2;1^2)(-2)$ and $(1;1)(-2)$ to
  $A(-2)$ are 0, so we see that $\tilde{\cO}_{2,3,n} / M \cong
  \cO_{3,3,n}(-2)$. The first few terms of the comparison map of the
  resolutions of $\tilde{\cO}_{2,3,n}$ and $\cO_{3,3,n}(-2)$ is given
  by
  \[
  \xymatrix{ {\begin{array}{c} (3,1;1^4)(-4) \\ (2,1^2;2,1^2)(-4) \\
        (3;1^3)(-4) \\ (2,1^2;2^2)(-5) \\ (2,1^2;2^2)(-5) \\
        (3;1^3)(-5) \\ (2,1;2,1)(-5) \end{array}} \ar[r] \ar[d] &
    {\begin{array}{c} (2,1;1^3)(-3) \\ (2;1^2)(-3) \\ (1^3;2,1)(-3) \\
        (1^3;2,1)(-4) \\ (1^2;2)(-4) \\ (2;1^2)(-4) \end{array}}
    \ar[r] \ar[d] & {\begin{array}{c} (1^2;1^2)(-2) \\ (1;1)(-2) \\
        (1;1)(-3) \end{array} } \ar[r] \ar[d] & {\begin{array}{c} A \\
        A(-1) \\ A(-2) \end{array}} \ar[d] \\
    {\begin{array}{c} (3;1^3)(-5) \\ (2,1;2,1)(-5) \\ (1^3;
        3)(-5) \end{array}} \ar[r] & {\begin{array}{c} (1^2;2)(-4) \\
        (2;1^2)(-4) \end{array}} \ar[r] & (1;1)(-3) \ar[r] & A(-2) }
  \]
  The maps $(1^2;2)(-4) \to (1;1)(-3)$ and $(2;1^2)(-4) \to (1;1)(-3)$
  in the resolution of $\tilde{\cO}_{2,3,n}$ are the Koszul relations
  on the linear equations $(1;1)(-3)$. This implies that the vertical
  maps between the terms of type $(1^2;2)(-4)$, $(2;1^2)(-4)$,
  $(3;1^3)(-5)$, and $(2,1;2,1)(-5)$ are isomorphisms, and the result
  follows by a mapping cone construction.
\end{proof}

\begin{theorem} \label{thm:13neqn}
  The defining equations for $\cK_{1,3,n}$ are
  \[
  (1^3;1^3)(-3) \oplus (1^3;2,1)(-4) \oplus (1^3;2,1)(-5) \oplus
  (1^3;3)(-6)
  \]
  The projective dimension of $\cK_{1,3,n}$ is $3n-11$ and its
  regularity is $5$.
\end{theorem}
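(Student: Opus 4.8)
The plan is to combine the two short exact sequences
\[
0 \to \cO_{1,3,n} \to \tilde{\cO}_{1,3,n} \to M(-1) \to 0, \qquad
0 \to M \to \tilde{\cO}_{2,3,n} \to \cO_{3,3,n}(-2) \to 0
\]
into the single four-term exact sequence of Theorem~\ref{thm:inductive}, and then to extract the minimal generators of the ideal of $\cO_{1,3,n}$ from a mapping-cone computation. The defining equations of $\cK_{1,3,n}$ are exactly $\bF_1$ in the minimal free resolution of $\cO_{1,3,n}$, so everything reduces to understanding the first step of that resolution together with the projective dimension and regularity bookkeeping.

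First I would use the first short exact sequence to build the resolution of $\cO_{1,3,n}$ as the mapping cone of a comparison map $\tilde{\bF}_\bullet \to (M(-1))_\bullet[1]$, where $\tilde{\bF}_\bullet$ is the resolution of $\tilde{\cO}_{1,3,n}$ coming from Proposition~\ref{prop:1dntilde} (specialized to $d=3$) and $(M(-1))_\bullet$ is the shifted resolution of $M$ from Proposition~\ref{prop:M}. The degree-$3$ generator $(1^3;1^3)(-3)$ should come directly from the presentation: the cokernel map $\tilde{\cO}_{1,3,n} \to M(-1)$ kills a copy of $A$ and shifts the linear part, so the new relation in degree $3$ is the $(1^3;1^3)$ determinantal equation. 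The three higher generators $(1^3;2,1)(-4)$, $(1^3;2,1)(-5)$, $(1^3;3)(-6)$ should appear by reading off which generators of $M(-1)$ — namely the $(-3)$, $(-4)$, $(-5)$ terms in the $\bF_1$ of Proposition~\ref{prop:M}, each shifted by $-1$ — survive to contribute to $\bF_1$ of $\cO_{1,3,n}$ after the minimal cancellations against $\tilde{\bF}_\bullet$ are accounted for.

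The main obstacle will be verifying that the comparison map induces precisely the cancellations that leave only these four representations in $\bF_1$, and no others, analogous to the argument in the proof of Theorem~\ref{thm:12n}. Concretely I would need to check, using Corollary~\ref{cor:1dntilde} (the explicit description of the differentials in the resolution of $\tilde{\cO}_{1,d,n}$) together with the $G$-equivariance constraints and Pieri's rule, that the relevant components of the comparison map are isomorphisms where they need to be and zero where they need to be. Since $M$ is built from $\tilde{\cO}_{2,3,n}$, whose resolution in Proposition~\ref{prop:23ntilde} is only controlled in characteristic $0$, the cleanest route is to do the representation-theoretic accounting over $\Q$ and then invoke the flatness/constant-rank statements to transfer the ranks; the equations themselves are $G$-stable so their identification as the listed representations is characteristic-independent.

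Finally, the numerical invariants follow from the mapping cone. The projective dimension $3n-11$ should come from $\operatorname{pdim} M = 3n-10$ (Proposition~\ref{prop:M}) and $\operatorname{pdim}\tilde{\cO}_{1,3,n} = n-3$ (Proposition~\ref{prop:1dntilde} with $d=3$): the cone over a map of complexes of these lengths, after the top cancellation coming from the surjection onto $\cO_{3,3,n}$, lands at homological degree $3n-11$. The regularity $5$ comes from the highest shift among the generators, $(1^3;3)(-6)$, which sits in internal degree $6 = 5+1$, consistent with $\operatorname{reg}\cO_{1,3,n} = 5 = \tfrac{d(d+1)}{2}-1$ for $d=3$; I would confirm regularity does not increase in later syzygies by noting that $M$ has regularity $3$ and $\tilde{\cO}_{1,3,n}$ has regularity $2$, so the shifted cone has regularity at most $\max(2, 3+1, \dots)$ bounded by $5$.
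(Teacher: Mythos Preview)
Your approach matches the paper's: lift the surjection $\tilde{\cO}_{1,3,n} \to M(-1)$ to a comparison of minimal resolutions (using Proposition~\ref{prop:1dntilde} and Proposition~\ref{prop:M}), verify via Corollary~\ref{cor:1dntilde} and $G$-equivariance that the comparison is an isomorphism on the pieces $(2,1;1^3)(-4)$ and $(2;1^2)(-4)$, and read off the four surviving generators from the mapping cone. The one step you take for granted but the paper actually carries out inside this proof is identifying the quotient $\tilde{\cO}_{1,3,n}/\cO_{1,3,n}$ with $M(-1)$ in the first place: this requires checking that the relation $(1^3;1^3)(-3)$ maps to zero in $A(-1)\oplus A(-2)$ (no nonzero $G$-equivariant map exists) while $(1^2;1^2)(-3)$ and $(1;1)(-3)$ map nontrivially (one exhibits matrices in $\cK_{1,3,n}$ on which those minors do not vanish). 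Your projective-dimension accounting is also slightly off --- the shifted mapping cone already has length $\max(\operatorname{pdim} M(-1)-1,\ \operatorname{pdim}\tilde{\cO}_{1,3,n}) = \max(3n-11,\,n-3)=3n-11$ directly, with no further top cancellation or reference to $\cO_{3,3,n}$ needed --- but the conclusion is correct.
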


\begin{remark} \label{rmk:13neqn} Using \eqref{eqn:hookcontent}, this
  proves \cite[Conjecture 3.6]{sturmfels}, which says that there are
  $\binom{n-3}{3}$ generators in degree 3, $2\binom{n-2}{3}$
  generators in degrees 4 and 5 each, and $\binom{n-1}{3}$ generators
  in degree 6. All of these equations may be interpreted as $3 \times
  3$ minors of the reduced Kalman matrix \eqref{eqn:reducedkalman}. We
  thank Giorgio Ottaviani for bringing this to our attention.
\end{remark}

\begin{proof} The proof is similar to that of
  Theorem~\ref{thm:12n}. The presentation for $\tilde{\cO}_{1,3,n}$ is
  \[
  \begin{array}{c} (1^3;1^3)(-3) \\ (1^2;1^2)(-3) \\
    (1;1)(-3) \end{array} \to \begin{array}{c} A \\ A(-1) \\
    A(-2) \end{array} \to \tilde{\cO}_{\cK_{1,3,n}} \to 0.
  \]
  The map $(1^3;1^3)(-3) \to A(-1) \oplus A(-2)$ is 0 since there are
  no nonzero such $G$-equivariant maps. Also, the maps from
  $(1^2;1^2)(-3)$ and $(1;1)(-3)$ to $A(-1) \oplus A(-2)$ are
  nonzero. If not, then they give generators for the ideal of
  $\cO_{1,3,n}$. In particular, if we pick an ordered basis for $V$
  which first has a basis for $L$ followed by a basis for $W$, then
  these equations correspond to the $2 \times 2$ minors and the $1
  \times 1$ minors of the bottom-left block submatrix, respectively,
  and we can find matrices in $\cK_{1,3,n}$ for which these equations
  do not vanish. 

  Hence from Proposition~\ref{prop:M}, $\tilde{\cO}_{1,3,n} /
  \cO_{1,3,n} \cong M(-1)$. The first few terms of the comparison maps
  between the free resolutions of $\tilde{\cO}_{1,3,n}$ and $M(-1)$
  are
  \[
  \xymatrix{ {\begin{array}{c} (2,1^2;1^4)(-4) \\ (2,1;1^3)(-4) \\
        (2;1^2)(-4)
      \end{array}} \ar[r] \ar[d] & {\begin{array}{c} (1^3;1^3)(-3) \\
        (1^2;1^2)(-3) \\ (1;1)(-3) \end{array}} \ar[r] \ar[d] &
    {\begin{array}{c} A \\ A(-1) \\ A(-2) \end{array}} \ar[d] 
    % \ar[r] & \tilde{\cO}_{\cK_{1,3,n}} \ar[r] \ar[d] & 0
    \\
    {\begin{array}{c} (2,1;1^3)(-4) \\ (2;1^2)(-4) \\ (1^3;2,1)(-4) \\ 
        (1^3;2,1)(-5) \\ (1^3;3)(-6) \end{array}} \ar[r]
    & {\begin{array}{c} (1^2;1^2)(-3) \\ (1;1)(-3) \end{array}}
    \ar[r] & {\begin{array}{c} A(-1) \\ 
        A(-2) \end{array}} %\ar[r] & M(-1) \ar[r] & 0.
  }
  \]
  The vertical maps between the terms $(2,1;1^3)(-4)$ and
  $(2;1^2)(-4)$ are isomorphisms. To see this, it is enough to show
  that the maps $(2,1;1^3)(-4) \to (1^2;1^2)(-3)$ and $(2;1^2)(-4) \to
  (1;1)(-3)$ in the resolution of $\tilde{\cO}_{1,3,n}$ are nonzero,
  but this is the content of Corollary~\ref{cor:1dntilde}. Now the
  result follows by a mapping cone construction.
\end{proof}

\subsection{Equations for $s=d-1$.}

In this section, we assume that $K$ has characteristic 0 and find the
equations for $\cO_{d-1,d,n}$. We can also do this in arbitrary
characteristic when $d=3$ since in this case, the next result
is implied by Proposition~\ref{prop:23ntilde}.

\begin{proposition} \label{prop:s=d-1} When $\mathrm{char}\, K = 0$,
  the first few terms of the minimal $A$-free resolution $\bF_\bullet$
  of $\tilde{\cO}_{d-1,d,n}$ are
  \begin{align*}
    \bF_0 &= \bigoplus_{j=0}^{d-1} A(-j)\\
    \bF_1 &= (1^2;1^2)(-2) \oplus \bigoplus_{j=2}^d (1;1)(-j)\\
    \bF_2 &= (1^3;2,1)(-4) \oplus \bigoplus_{j=3}^{d+1} (2;1^2)(-j)
    \oplus \bigoplus_{j=4}^{d+1} (1^2;2)(-j)
  \end{align*}
\end{proposition}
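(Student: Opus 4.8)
The plan is to apply the geometric method of Theorem~\ref{thm:geometric}\eqref{item:geometric1} to the case $X = \Gr(d-1,L)$, which is isomorphic to $\P(L^*) = \P^{d-1}$ since an $(d-1)$-dimensional subspace of $L$ is the same as a hyperplane, i.e.\ a point of the dual projective space. Here $\rank \cR = d-1$ and $\rank \cQ = 1$, so $\cQ$ is a line bundle and $\cQ^*$ is an ample generator of $\Pic(\Gr(d-1,L)) \cong \Z$. The bundle computing the resolution is $\xi = \cR \otimes (\cQ^* \oplus W)$, and since we are in characteristic $0$ we have access to the full Borel--Weil--Bott theorem (Theorem~\ref{thm:bott}). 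So the whole computation reduces to understanding the cohomology of the exterior powers $\bigwedge^q \xi$ for small $q$, and assembling the graded pieces into $\bF_0$, $\bF_1$, $\bF_2$ via the formula $\bF_i = \bigoplus_{j\ge 0} \rH^j(X; \bigwedge^{i+j}\xi)\otimes A(-i-j)$.

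The key steps I would carry out in order are as follows. First, decompose $\bigwedge^q(\cR \otimes (\cQ^* \oplus W))$ using the identity $\bigwedge^q(A\oplus B) = \bigoplus_{k} \bigwedge^k A \otimes \bigwedge^{q-k} B$ together with the Cauchy identity (Section~\ref{sec:charfree}) applied to each factor $\bigwedge^k(\cR\otimes \cQ^*)$ and $\bigwedge^{q-k}(\cR\otimes W)$; since $\cQ$ is a line bundle, only single-column partitions survive in the $\cQ^*$ factor, which keeps the bookkeeping manageable. The summands take the form $\Sc_\lambda \cR \otimes (\cQ^*)^{\otimes m} \otimes \Sc_\mu W$. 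Second, for each such summand, apply Borel--Weil--Bott on $\P^{d-1}$ to compute which cohomological degree $j$ (if any) contributes and what $\GL(L)$-representation $\Sc_\eta L$ results; the dotted-action combinatorics $w\bullet\nu = w(\nu+\rho)-\rho$ is especially simple on a projective space since only a cyclic shift of one entry past a block is ever needed. Third, collect the contributions indexed by $(i,j)$ with $i+j = q \le 3$ or $4$, matching each to the claimed terms of $\bF_0$, $\bF_1$, $\bF_2$ and verifying the grading shifts $A(-i-j)$. I would also note that Theorem~\ref{thm:geometric}\eqref{item:geometric2} can be used to confirm that the apparent free module direct sum structure (e.g.\ the split between the $\cQ^*$-powers and the $W$-powers) is genuinely there.

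The main obstacle I expect is purely organizational rather than conceptual: the cohomology of each individual summand is routine via Borel--Weil--Bott, but there are many summands as $q$ grows, and one must carefully track both the content partition $\eta$ and its associated cohomological degree to be sure that (a) no spurious terms are introduced, (b) terms that the dotted action sends to a singular weight correctly vanish, and (c) the resolution is \emph{minimal}, i.e.\ no two adjacent $\bF_i, \bF_{i-1}$ share a common summand that would admit a nonzero degree-$0$ differential forcing cancellation. For minimality I would rely on the general fact from Theorem~\ref{thm:geometric} that the differentials have degree $0$, so a cancellation between $\bF_i$ and $\bF_{i-1}$ can only occur between summands with the same grading shift and compatible representations; inspecting the claimed answer shows the shifts are arranged so that such coincidences are controlled. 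Because we only need the first three terms, I would stop the cohomology computation once $q$ exceeds the range needed to determine $\bF_2$ (namely $q \le 4$), which bounds the combinatorial labor. The final remark is that the case $d=3$ is already covered by Proposition~\ref{prop:23ntilde} in arbitrary characteristic, so this proposition is the genuinely new characteristic-$0$ statement for $d \ge 4$.
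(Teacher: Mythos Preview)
Your overall strategy---apply Theorem~\ref{thm:geometric}\eqref{item:geometric1} on $\Gr(d-1,L)\cong\P^{d-1}$, expand $\bigwedge^q\xi$ via the splitting $\xi=\cR\otimes(\cQ^*\oplus W)$ together with the Cauchy identity, then compute cohomology by Borel--Weil--Bott---is exactly what the paper does. But your plan contains a genuine gap: you propose to ``stop the cohomology computation once $q$ exceeds the range needed to determine $\bF_2$ (namely $q\le 4$)''. This is false. The term $(2;1^2)(-j)$ in $\bF_2$ with shift $-j$ arises from $\rH^{j-2}(X;\bigwedge^{j}\xi)$, and the proposition asserts such terms for $j$ up to $d+1$. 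Since $\dim X=d-1$, cohomology can and does occur in degrees up to $d-1$, so $\bF_2$ receives contributions from $\bigwedge^q\xi$ for every $q\le d+1$. Bounding $q\le 4$ would recover the claimed answer only for $d\le 3$.

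The paper's organizing device is precisely what you are missing. After the Cauchy/Pieri expansion, every summand of $\bigwedge^q\xi$ has the form $\rS^a\cQ^*\otimes\Sc_\lambda\cR\otimes(\text{something in }W)$ with $|\lambda|=q$. One then observes (from the shape of the dotted action on $\P^{d-1}$) that $\rS^a\cQ^*\otimes\Sc_\lambda\cR$ has cohomology only in degrees $\le\ell(\lambda)$. Hence a contribution to $\bF_i$ forces $i=q-j\ge|\lambda|-\ell(\lambda)$, so $i\le 2$ requires $|\lambda|-\ell(\lambda)\le 2$. This restricts $\lambda$ to four shape \emph{types}---$(1^q)$, $(2,1^{q-2})$, $(3,1^{q-3})$, $(2,2,1^{q-4})$---uniformly in $q$, and each type is then analyzed for all $a$ and all $q$ at once. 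That is how the ranges $j=2,\dots,d$ and $j=3,\dots,d+1$ etc.\ in the statement are produced. A minor point: your concern (c) about minimality is unnecessary, since Theorem~\ref{thm:geometric}\eqref{item:geometric1} already guarantees the differentials are minimal.
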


\begin{proof}
  In this case,
  \[
  \bigwedge^q \xi = \bigoplus_{a=0}^{d-1} \bigwedge^a \cR \otimes \rS^a
  \cQ^* \otimes \bigwedge^{q-a} (\cR \otimes W).
  \]
  So we have to calculate the cohomology of sheaves of the form $\rS^a
  \cQ^* \otimes \Sc_\lambda \cR$.

  By Borel--Weil--Bott (Theorem~\ref{thm:bott}), the sheaf $\rS^a \cQ^*
  \otimes \Sc_\lambda \cR$ has cohomology in degree at most
  $\ell(\lambda)$, and such a term appears in $\bigwedge^{|\lambda|}
  \xi$. By Theorem~\ref{thm:geometric}, this term can only contribute
  to $\bF_i$ with $i=0,1,2$ if $|\lambda|-2 \le \ell(\lambda)$. So the
  only possibilities for $\lambda$ with $|\lambda| = q$ are $(1^q)$,
  $(2,1^{q-2})$, $(3,1^{q-3})$, or $(2,2,1^{q-4})$. We will consider
  each of these four cases individually. Recall that $\rho = (d-1,
  d-2, \dots, 1, 0)$.

  Consider a sequence $(-a,1^q)$. Adding $\rho$, we get $(d-1-a, d-1,
  d-2, \dots, d-q, d-q-2, \dots, 1,0)$. So in order to have nonzero
  cohomology, we need $a=q$ where $0 \le a \le d-1$. We get $\rH^0 =
  K$.

  Now consider a sequence $(-a,2,1^{q-2})$. Adding $\rho$, we get
  $(d-1-a, d, d-2, \dots, d-q+1, d-q-1, d-q-2, \dots, 1,0)$. To get
  nonzero cohomology, we need $a=0$ and $q=2$, or $a \ge 1$ and $q =
  a+1$. In the first case, we get $\rH^1 = \bigwedge^2 L$ and in the
  second case, we get $\rH^a = L$. The first case only comes from the
  sheaf $\rS^2 \cR \otimes \bigwedge^2 W$. The second case only comes
  from $\bigwedge^a \cR \otimes \rS^a \cQ^* \otimes \cR \otimes W$. 

  Now consider a sequence $(-a,3,1^{q-3})$. Adding $\rho$, we get
  $(d-1-a, d+1, d-2, d-3, \dots, d-q+2, d-q, \dots, 1,0)$. So we need
  $a \ge 1$ and $q = a+2$. In this case, $\rH^a = \rS^2 L$. This can
  only come from the sheaf $\Sc_{3,1^{a-1}} \cR \otimes \rS^a \cQ^*
  \otimes \bigwedge^2 W \subset \bigwedge^a \cR \otimes \rS^a \cQ^*
  \otimes \bigwedge^2(\cR \otimes W)$.

  Finally consider a sequence $(-a, 2,2,1^{q-4})$. Adding $\rho$, we
  get $(d-1-a, d, d-1, d-3, \dots, d-q+2, d-q, \dots,1, 0)$. So either
  $a=1$, which gives $\rH^2 = \bigwedge^{q-1} L$ or $a \ge 2$ and $q =
  a+2$, which gives $\rH^a = \bigwedge^2 L$. If the first case
  contributes to $\bF_2$, then $q=4$. This comes from the sheaf
  $\Sc_{2,2} \cR \otimes \cQ^* \otimes \Sc_{2,1} W \subset \cR \otimes
  \cQ^* \otimes \bigwedge^3(\cR \otimes W)$. The second case comes
  from the sheaf $\Sc_{2,2,1^{a-2}} \cR \otimes \rS^a \cQ^* \otimes \rS^2
  W \subset \bigwedge^a \cR \otimes \rS^a \cQ^* \otimes \bigwedge^2(\cR
  \otimes W)$.
\end{proof}

\begin{theorem} Assume either that $\mathrm{char}\, K = 0$ or $d=3$.
  Then the equations for $\cO_{d-1,d,n}$ are
  \begin{align*}
    \bigwedge^2 L \otimes \bigwedge^2 W \otimes A(-2),\quad \quad
    \bigwedge^2 L \otimes \rS^2 W \otimes A(-3).
  \end{align*}
\end{theorem}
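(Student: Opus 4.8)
Proof proposal.

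The plan is to follow the template of Theorems~\ref{thm:12n} and \ref{thm:13neqn}: present $\cO_{d-1,d,n}$ as the cyclic submodule of its normalization generated in degree $0$, compute the quotient, and read off the equations from a mapping cone. I work in characteristic $0$ using Proposition~\ref{prop:s=d-1}; for $d=3$ the same argument runs in every characteristic with Proposition~\ref{prop:23ntilde} in place of Proposition~\ref{prop:s=d-1}.

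First I set up the quotient. By Proposition~\ref{prop:s=d-1} the module $\tilde{\cO}_{d-1,d,n}$ is generated by $G$-invariants $e_0,\dots,e_{d-1}$ in degrees $0,\dots,d-1$, and since $\cO_{d-1,d,n}$ is reduced the normalization map embeds it as the cyclic submodule $Ae_0$; thus $Q:=\tilde{\cO}_{d-1,d,n}/\cO_{d-1,d,n}$ is generated by the images of $e_1,\dots,e_{d-1}$ and is supported on the non-normal locus $\cK_{d,d,n}=\{\gamma=0\}=V(L\otimes W)$. The generator $(1^2;1^2)(-2)=\bigwedge^2 L\otimes\bigwedge^2 W$ of $\tilde{\bF}_1$ admits no nonzero $G$-equivariant map to any of the remaining (trivial) generators, because no copy of $\bigwedge^2 L\otimes\bigwedge^2 W$ occurs in $A_0=K$ or in $A_1=\End(V)^*$; hence it maps into $Ae_0$ and already exhibits the first family of equations $\bigwedge^2 L\otimes\bigwedge^2 W\otimes A(-2)$. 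Each relation $(1;1)(-j)$, $2\le j\le d$, instead has nonzero leading component $L\otimes W\to A_1 e_{j-1}$, so the images of $e_1,\dots,e_{d-1}$ endow $Q$ with a filtration whose successive quotients are the coordinate rings $\cO_{d,d,n}(-k)$, $1\le k\le d-1$, of the affine space $\cK_{d,d,n}$.

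Next I compare resolutions. The bottom filtration piece $Ae_1\cong\cO_{d,d,n}(-1)$, whose cokernel $Q/Ae_1$ is generated in degrees $\ge 2$, already controls the resolution of $Q$ in degrees $\le 3$: from the Koszul complex of $\cO_{d,d,n}$ one finds $\Tor_1(Q,K)=\bigoplus_{j=2}^{d}(1;1)(-j)$ and, in degree $3$, $\Tor_2(Q,K)=\bigwedge^2(L\otimes W)(-3)=(1^2;2)(-3)\oplus(2;1^2)(-3)$. Forming the mapping cone of the comparison map from $\tilde{\bF}_\bullet$ to a minimal resolution $\bG_\bullet$ of $Q$ and cancelling isomorphic summands: the $d-1$ copies of $(1;1)(-j)$ in $\tilde{\bF}_1$ cancel against $\Tor_1(Q,K)$, while in homological degree $2$ the summands $(2;1^2)(-j)$ for $3\le j\le d+1$ and $(1^2;2)(-j)$ for $4\le j\le d+1$ of $\tilde{\bF}_2$ cancel the matching summands of $\bG_2$. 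The single summand of $\bG_2$ with no partner is $(1^2;2)(-3)=\bigwedge^2 L\otimes\rS^2 W$, since $\tilde{\bF}_2$ contains a copy of $(2;1^2)(-3)$ but, as a short Borel--Weil--Bott computation shows, no copy of $(1^2;2)(-3)$. The mapping cone therefore deposits $(1^2;2)(-3)$ among the minimal generators, and together with the degree-$2$ family this yields exactly $\bigwedge^2 L\otimes\bigwedge^2 W\otimes A(-2)\oplus\bigwedge^2 L\otimes\rS^2 W\otimes A(-3)$; all other surviving summands of $\tilde{\bF}_2$ feed into second syzygies, not into equations.

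The main obstacle is the nondegeneracy of these cancellations. One must show that each matched pair of isomorphic $G$-isotypic summands in the comparison map is joined by a nonzero — hence, by Schur's lemma, invertible — map; in particular that no relation $(1;1)(-j)$ maps entirely into $Ae_0$ (which would produce a spurious linear-type equation of degree $j$), i.e. that its leading component onto $e_{j-1}$ is nonzero. As in the remark after Theorem~\ref{thm:12n}, the cleanest justification is the Cohen--Macaulay nonvanishing principle: $\tilde{\cO}_{d-1,d,n}$ has rational singularities (hence is Cohen--Macaulay) and the filtration pieces $\cO_{d,d,n}$ are regular, which forces the relevant comparison maps to be nonzero; alternatively the individual differential components can be pinned down directly via Theorem~\ref{thm:geometric}\eqref{item:geometric2} together with a Koszul/comultiplication computation as in Corollary~\ref{cor:1dntilde}. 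A secondary point is that the filtration of $Q$ need not split, so one must check — as arranged above by isolating the bottom piece $\cO_{d,d,n}(-1)$ — that the non-split extensions do not perturb $\Tor_1(Q,K)$ and $\Tor_2(Q,K)$ in the degrees $\le 3$ that control the equations.
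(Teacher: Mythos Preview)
Your strategy is the paper's: identify $Q=\tilde{\cO}_{d-1,d,n}/\cO_{d-1,d,n}$, resolve it, and read off the equations from the mapping cone $\tilde{\bF}_\bullet\to\bG_\bullet$. The difference is in how you handle $Q$. The paper asserts that $Q\cong\bigoplus_{j=1}^{d-1}\cO_{d,d,n}(-j)$ as a direct sum, so $\bG_\bullet$ is a sum of Koszul complexes and $\bG_2=\bigoplus_{j=3}^{d+1}[(2;1^2)(-j)\oplus(1^2;2)(-j)]$ in every degree. You instead take a filtration, explicitly allow that it might not split, and compute $\Tor_2(Q)$ only in degree $3$ via the bottom piece.

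That leaves a genuine gap. In the mapping cone, the equations of $\cO_{d-1,d,n}$ sit inside $\tilde{\bF}_1\oplus\bG_2$; any summand of $\bG_2$ in degree $\ge 4$ not hit by $\tilde{\bF}_2$ would produce an additional equation. Your assertion that ``degrees $\le 3$ control the equations'' is exactly the point at issue, and your bottom-piece argument does not touch $\Tor_2(Q)_j$ for $j\ge 4$. When you then speak of $\tilde{\bF}_2$ cancelling ``the matching summands of $\bG_2$'' in degrees $4,\dots,d+1$, you are implicitly assuming the direct-sum description you set out to avoid.

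The fix is short and recovers the paper's statement. In characteristic $0$ the filtration actually splits $G$-equivariantly: a $G$-equivariant extension of $\cO_{d,d,n}(-k)$ by $\cO_{d,d,n}(-i)$ is classified by a $G$-map $L\otimes W\to (A/(L\otimes W))_{k+1-i}$, but the center of $\GL(L)$ acts with weight $+1$ on the source and with nonpositive weights on every graded piece of $A/(L\otimes W)=\Sym(\End(L)\oplus W^*\!\otimes L^*\oplus\End(W))$, so no such nonzero map exists. Equivalently, the filtration spectral sequence for $\Tor_\bullet(Q)$ has $d_r$ landing in a different exterior power $\bigwedge^{i-1}(L\otimes W)$ and hence vanishes by Schur's lemma. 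Either way $\bG_i=\bigoplus_{k=1}^{d-1}\bigwedge^i(L\otimes W)(-k-i)$, and your cancellation argument then goes through in all degrees.
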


The interpretation of these equations is just as in
Remark~\ref{rmk:13neqn}.

\begin{proof}
  Using arguments similar to before, the presentation for
  $\tilde{\cO}_{d-1,d,n} / \cO_{d-1,d,n}$ is
  \[
  \bigoplus_{j=2}^d (1;1)(-j) \to \bigoplus_{j=1}^{d-1} A(-j),
  \]
  The maps in the above are of the form $(1;1)(-j-1) \to A(-j)$ for
  $j=1,\dots,d-1$. So in some choice of basis, the cokernel is
  $\bigoplus_{j=1}^{d-1} \cO_{d,d,n}(-j)$, which is resolved by a
  direct sum of Koszul complexes. The next term in the Koszul complex
  is $\bigoplus_{j=3}^{d+1} [(1^2;2)(-j) \oplus (2;1^2)(-j)]$. Let
  $\bF_\bullet$ be the minimal free resolution of
  $\tilde{\cO}_{d-1,d,n}$. Using arguments similar to before, all
  terms in $\bF_2$ of the form $(1^2;2)(-j)$ and $(2;1^2)(-j)$ have a
  nonzero map to $(1;1)(-j+1)$ in $\bF_1$. Hence the maps from these
  terms to the corresponding terms of the Koszul complex of
  $\tilde{\cO}_{d-1,d,n} / \cO_{d-1,d,n}$ are nonzero, and we finish
  the proof by a mapping cone construction.
\end{proof}

\subsection{Conjecture~\ref{conj:inductive} when
  $n=d+1$.} \label{sec:conjsketch}

In this section, we sketch a proof of Conjecture~\ref{conj:inductive}
in the case when $\mathrm{char}\, K = 0$ and $n = d+1$. Since the
details are fairly involved and because this result is not very
substantial, we will just mention the important points, and offer it
as evidence for the validity of Conjecture~\ref{conj:inductive}.

\begin{proposition} \label{prop:n=d+1} When $\mathrm{char}\, K = 0$,
  the terms of the minimal free resolution $\bF_\bullet$ of
  $\tilde{\cO}_{s,d,d+1}$ are
  \[
  \bF_i = \bigoplus_{\lambda \subseteq (s-i) \times (d-s)} \bigwedge^i
  L \otimes \rS^i W \otimes A(-i(d-s+1) - |\lambda|)
  \]
  where $\lambda \subseteq (s-i) \times (d-s)$ means $\ell(\lambda)
  \le s-i$ and $\lambda_1 \le d-s$, and the empty partition is
  allowed.
\end{proposition}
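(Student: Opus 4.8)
The plan is to apply the geometric technique of Theorem~\ref{thm:geometric}\eqref{item:geometric1}. Since $\mathrm{char}\,K = 0$ and the higher direct images of $\cS$ vanish, $\bF_\bullet$ is the minimal free resolution of $\tilde{\cO}_{s,d,d+1}$ with $\bF_i = \bigoplus_{j\ge 0}\rH^j(\Gr(s,L); \bigwedge^{i+j}\xi)\otimes_K A(-i-j)$, so the whole problem is to compute the cohomology of $\bigwedge^q\xi$ for every $q$. The essential simplification in the case $n = d+1$ is that $\dim W = n - d = 1$. Writing $\xi = \cR\otimes(\cQ^*\oplus W)$ and expanding the exterior power of a direct sum gives $\bigwedge^q\xi = \bigoplus_{a+b=q}\bigwedge^a(\cR\otimes\cQ^*)\otimes\bigwedge^b(\cR\otimes W)$. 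Because $W$ is one-dimensional, the Cauchy identity collapses $\bigwedge^b(\cR\otimes W)$ to $\bigwedge^b\cR\otimes\rS^b W$; applying Cauchy to $\bigwedge^a(\cR\otimes\cQ^*)$ and then the Pieri rule to merge $\Sc_\nu\cR\otimes\bigwedge^b\cR$, every summand of $\bigwedge^q\xi$ takes the form $\Sc_\kappa\cR\otimes\Sc_{\nu'}\cQ^*\otimes\rS^b W$ where $\nu\subseteq s\times(d-s)$ and $\kappa/\nu$ is a vertical $b$-strip with $\ell(\kappa)\le s$.

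Next I would feed each such sheaf into Borel--Weil--Bott (Theorem~\ref{thm:bott}), applied to the weight obtained by concatenating the $\cQ$- and $\cR$-data and adding $\rho = (d-1,\dots,0)$. The key observation that makes the bookkeeping tractable is that all $d$ entries of this shifted weight automatically lie in $\{0,1,\dots,d\}$: the $s$ entries coming from $\kappa$ range over $[0,d]$ (using $\kappa_1\le\nu_1+1\le d-s+1$), while the $d-s$ entries coming from $\nu'$ range over $[0,d-1]$. Hence nonvanishing cohomology, which requires these entries to be distinct, forces them to be exactly $\{0,1,\dots,d\}$ with a single value $m$ deleted. Subtracting $\rho$ from the resulting staircase then yields $\eta = 1^{d-m}$ no matter what, so the only representation of $\GL(L)$ that ever appears is $\bigwedge^b L$ with $b = d - m = |\kappa| - |\nu|$, matching the $\rS^b W$ factor.

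The main obstacle is the remaining combinatorics, which I would isolate as a single lemma: among the vertical-strip summands above, the $\kappa$- and $\nu'$-entries are distinct \emph{if and only if} $\nu = ((d-s)^b,\lambda)$ and $\kappa = ((d-s+1)^b,\lambda)$ for some $\lambda\subseteq(s-b)\times(d-s)$, and in that case the length of the sorting permutation is $\ell(w) = |\nu| = b(d-s) + |\lambda|$. Granting this, the summand contributes $\bigwedge^b L\otimes\rS^b W$ in cohomological degree $\ell(w)$, hence to $\bF_{q-\ell(w)} = \bF_b$ with twist $A(-q) = A(-b(d-s+1)-|\lambda|)$, and the assignment $\nu\mapsto\lambda$ is a bijection onto partitions $\lambda\subseteq(s-b)\times(d-s)$; summing over $b = i$ then gives exactly the stated $\bF_i$. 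To prove the lemma I would observe that, once the top $b$ entries $d,d-1,\dots,m+1$ (contributed by $\kappa$) are stripped off, the surviving $\kappa$- and $\nu'$-entries are precisely the Bott data for the ``diagonal'' Cauchy summand $\Sc_\lambda\cR'\otimes\Sc_{\lambda'}\cQ'^*$ of $\bigwedge^{|\lambda|}$ of the cotangent bundle of the smaller Grassmannian $\Gr(s-b,L')$ with $\dim L' = m$. The degree then splits as $\ell(w) = b(d-s) + |\lambda|$, where $b(d-s)$ counts the crossings of the $b$ large $\cR$-entries past all $d-s$ of the $\cQ$-entries and $|\lambda|$ is the length of the sorting permutation for the sub-Grassmannian, and both the distinctness and the value $|\lambda|$ reduce to the standard fact that this diagonal summand contributes one-dimensional cohomology in degree $|\lambda|$. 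Translating the vertical-strip condition through this reduction is where the real work lies, and I expect that verifying no other $(\nu,\kappa)$ produces distinct entries will be the most delicate point.
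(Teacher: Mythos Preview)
The paper does not actually supply a proof of this proposition: it sits in a section explicitly labeled a sketch, and the only hint the paper gives is the remark immediately following the statement, which identifies the generators in $\bF_0$ with $\bigoplus_{\lambda \subseteq s\times(d-s)} \rH^{|\lambda|}(\Gr(s,L);\Sc_\lambda\cR\otimes\Sc_{\lambda'}\cQ^*)\otimes A(-|\lambda|)$. That remark is precisely your $b=0$ case, so your plan is consistent with what the paper has in mind.

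Your approach is correct. The reduction you describe really does work, and the ``only if'' direction of your lemma, which you flag as the delicate point, can be completed as follows. Once you know (from the weight-sum computation you already did) that the missing value is $m=d-b$, argue by induction on $j=1,\dots,b$ that the entry $d+1-j$ must come from the $\cR$-block. For $j=1$ this is immediate since all $\cQ$-entries are at most $d-1$, forcing $\kappa_1=d-s+1$; the vertical-strip condition combined with $\nu_1\le d-s$ then gives $\nu_1=d-s$, hence $\nu'_k\ge 1$ for all $k\le d-s$, hence every $\cQ$-entry is at most $d-2$. Repeating, at step $j$ the $\cQ$-entries are already forced below $d-j+1$, so that value is the $j$th $\cR$-entry, giving $\kappa_j=d-s+1$ and $\nu_j=d-s$. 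After $b$ steps the vertical strip is exhausted, so $\kappa_j=\nu_j=\lambda_{j-b}$ for $j>b$, and what remains is exactly the Borel--Weil--Bott data for the diagonal summand $\Sc_\lambda\cR'\otimes\Sc_{\lambda'}(\cQ')^*$ on $\Gr(s-b,d-b)$, whose entries are automatically the full set $\{0,\dots,d-b-1\}$ with sorting length $|\lambda|$. This gives $\ell(w)=b(d-s)+|\lambda|$ and finishes the argument.
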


In particular, the generators in $\bF_0$ can be written as
\[
\bigoplus_{\lambda \subseteq s \times (d-s)} \rH^{|\lambda|}(\Gr(s,L);
\Sc_{\lambda} \cR \otimes \Sc_{\lambda'} \cQ^*) \otimes A(-|\lambda|).
\]
So we see that $\tilde{\cO}_{s,d,d+1}$ is generated by
$\bigoplus_{\lambda \subseteq s \times (d-s)} A(-|\lambda|)$. Let
$C_s$ be the submodule of $\tilde{\cO}_{s,d,d+1}$ generated by
$\bigoplus_{\lambda \subseteq (s-1) \times (d-s)} A(-|\lambda|)$ (this
is unambiguous by the above remark). Also define $C_{d+1} = 0$. Note
that $C_1 = \cO_{1,d,d+1}$ and $C_d = \cO_{d,d,d+1}$.

\begin{proposition} Suppose $\mathrm{char}\, K = 0$. For $s=1, \dots,
  d$, there are short exact sequences
  \[
  0 \to C_s \to \tilde{\cO}_{s,d,d+1} \to C_{s+1}(-s) \to 0.
  \]
  Hence Conjecture~\ref{conj:inductive} is true in the case $n=d+1$.
\end{proposition}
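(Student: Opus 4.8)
The plan is to prove each short exact sequence separately and then splice them. The inclusion $C_s \hookrightarrow \tilde{\cO}_{s,d,d+1}$ is exact by the definition of $C_s$ as a submodule, so the entire content is the identification of the quotient $Q_s := \tilde{\cO}_{s,d,d+1}/C_s$ with $C_{s+1}(-s)$. First I would pin down the generators on both sides. By Proposition~\ref{prop:n=d+1}, $\tilde{\cO}_{s,d,d+1}$ is minimally generated by one element $e_\lambda$ in degree $|\lambda|$ for each $\lambda \subseteq s \times (d-s)$, and $C_s$ is generated by those $e_\lambda$ with $\ell(\lambda) \le s-1$; hence $Q_s$ is generated by the images of the $e_\lambda$ with exactly $s$ rows. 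Removing the first column gives a bijection $\lambda \mapsto \mu = \lambda - (1^s)$ from $\{\lambda \subseteq s\times(d-s) : \ell(\lambda)=s\}$ onto $\{\mu \subseteq s \times (d-s-1)\}$, which is precisely the index set for the generators of $C_{s+1}$. Since $|\mu| = |\lambda| - s$, the generator $e_\lambda$ in degree $|\lambda|$ matches $e_\mu$ viewed inside $C_{s+1}(-s)$ (degree $|\mu|+s = |\lambda|$), so the two modules share generators in identical degrees, and this column-removal map is the candidate isomorphism.

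To upgrade this matching of generators to an $A$-module isomorphism I would compare minimal free resolutions, exactly as in the proofs of Theorem~\ref{thm:12n}, Theorem~\ref{thm:13neqn} and Proposition~\ref{prop:M}, running a descending induction on $s$. The base case $s=d$ is immediate: $\tilde{\cO}_{d,d,d+1}=\cO_{d,d,d+1}$ is affine space, hence normal, $C_d$ equals the whole module (the only $\lambda \subseteq d\times 0$ is empty), and $C_{d+1}=0$, so the sequence is trivial. For the inductive step, the hypothesis at level $s+1$ furnishes $0 \to C_{s+1} \to \tilde{\cO}_{s+1,d,d+1} \to C_{s+2}(-s-1) \to 0$, and feeding the known resolution of $\tilde{\cO}_{s+1,d,d+1}$ from Proposition~\ref{prop:n=d+1} into a mapping cone produces a minimal free resolution of $C_{s+1}$. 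I would then lift the quotient map $\tilde{\cO}_{s,d,d+1} \to Q_s$ to a comparison map against this resolution. Using Borel--Weil--Bott (Theorem~\ref{thm:bott}) together with the representation-theoretic observation that $G$-equivariant maps of the wrong weight must vanish (as in the earlier proofs), the cancellations in the mapping cone are forced, and they reproduce exactly the resolution of $C_{s+1}(-s)$ predicted by the column-removal bijection, yielding $Q_s \cong C_{s+1}(-s)$.

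To deduce Conjecture~\ref{conj:inductive}, I would twist the $s$-th sequence by $-\binom{s}{2}$ and splice. Because $\binom{s}{2} + s = \binom{s+1}{2}$, the quotient $C_{s+1}(-s)$ of the $s$-th sequence, once twisted, becomes $C_{s+1}(-\binom{s+1}{2})$, which is precisely the submodule appearing in the twisted $(s+1)$-st sequence. With $B_s = \tilde{\cO}_{s,d,d+1}(-\binom{s}{2})$, $C_1 = \cO_{1,d,d+1}$ and $C_{d+1}=0$, the spliced complex is exactly $0 \to \cO_{1,d,d+1} \to B_1 \to \cdots \to B_d \to 0$. The projective dimension and regularity are then read off from this resolution, and the ideal generators degenerate (as in Remark~\ref{rmk:13neqn} specialized to $n=d+1$) to a single equation in degree $\binom{d+1}{2}$, consistent with $\cK_{1,d,d+1}$ being a hypersurface.

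The main obstacle is the second step: organizing the comparison maps and cancellations for general $s$. Even though characteristic $0$ makes each individual cohomology group a single irreducible via Theorem~\ref{thm:bott}, tracking which summands of the mapping cone survive---the Littlewood--Richardson and Pieri bookkeeping carried out simultaneously across all $s$---is delicate, which is why only a sketch is offered here.
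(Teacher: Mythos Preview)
Your proposal is correct and follows essentially the same route as the paper: descending induction on $s$ with base case $s=d$, identifying the quotient via a comparison of resolutions and mapping cones, then splicing the twisted short exact sequences. The one substantive difference is packaging. You carry along the entire chain of short exact sequences at levels $s+1,\dots,d$ and rebuild a resolution of $C_{s+1}$ by iterated mapping cones each time; the paper instead isolates a single explicit inductive hypothesis---namely that the first $s-1$ terms of the minimal free resolution of $C_s$ are
\[
\bF^s_i = \bigoplus_{\lambda \subseteq (s-1)\times(d-s)} \bigwedge^i L \otimes \rS^i W \otimes A(-i-|\lambda|) \quad (0 \le i \le s-1),
\]
and remarks that this is ``just strong enough'' to close the induction. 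This is exactly the output your iterated mapping cones would produce once the cancellations are tracked, so the two arguments coincide; the paper's formulation simply avoids re-deriving the resolution of $C_{s+1}$ from scratch at each step and makes the bookkeeping you flag in your last paragraph more manageable (recall also that $\dim W = 1$ here, so the $W$-factors are all one-dimensional and the Pieri combinatorics is lighter than you suggest). Your column-removal bijection on generators is a nice explicit touch that the paper leaves implicit.
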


\begin{proof}
  We claim that the first $s-1$ terms of the minimal free resolution
  $\bF^s_\bullet$ of $C_s$ are
  \[
  \bF^s_i = \bigoplus_{\lambda \subseteq (s-1) \times (d-s)}
  \bigwedge^i L \otimes \rS^i W \otimes A(-i-|\lambda|) \quad (0 \le i
  \le s-1).
  \]
  This hypothesis is just strong enough to allow one to prove the
  result and the claim by descending induction on $s$. The case $s =
  d$ is clear since $C_d = \cO_{d,d,d+1}$ and is resolved by a Koszul
  complex.
\end{proof}

\bigskip

\small \noindent Steven V Sam, Department of Mathematics,
Massachusetts Institute of Technology, Cambridge, MA 02139 \\
{\tt ssam@math.mit.edu}, \url{http://math.mit.edu/~ssam/}

\end{document}